\theoremstyle{plain}
\newtheorem{corollary}{Corollary}[section]
\newtheorem{proposition}{Proposition}[section]
\theoremstyle{definition}
\newtheorem{definition}{Definition}[section]
\newtheorem{remark}{Remark}[section]
\begin{document}

\title{Adapted basic connections to a certain subfoliation on the tangent manifold of a Finsler space}
\author{Adelina Manea and Cristian Ida}
\date{}
\maketitle

\begin{abstract}
On the slit tangent manifold $TM^0$ of a Finsler space $(M,F)$ there are given some natural foliations as vertical foliation and some other fundamental foliations produced by the vertical and horizontal Liouville vector fields, see [ A. Bejancu, H. R. Farran, {\em Finsler Geometry and Natural Foliations on the Tangent Bundle}, Rep. Math. Physics \textbf{58}, No. 1 (2006), 131-146]. In this paper we consider a $(n,2n-1)$-codimensional subfoliation $(\mathcal{F}_V,\mathcal{F}_{\Gamma})$ on $TM^0$ given by vertical foliation $\mathcal{F}_V$ and the line foliation spanned by vertical Liouville vector field $\Gamma$ and we give a triplet of basic connections adapted to this subfoliation.
\end{abstract}

\medskip

\begin{flushleft}
\strut \textbf{2010 Mathematics Subject Classification}: 53B40, 53C12, 53C60.

\textbf{Key Words}: Finsler manifold, subfoliation, basic connection.
\end{flushleft}

\section{Introduction and preliminaries}
\setcounter{equation}{0}

\subsection{Introduction}

The Finsler manifolds are interesting models for some physical phenomena, so their properties are useful to be investigate \cite{B-C-S, B-M, M-A}.
On the other hand in the paper \cite{Bej} Bejancu and Farran have initiated a study of interrelations between the geometry of foliations on the tangent manifold of a Finsler space and the geometry of the Finsler space itself. The main idea of their paper is to emphasize the importance of some foliations which exist on the tangent bundle of a Finsler space $(M,F)$, in studying the differential geometry of $(M,F)$ itself. In this direction, in the last decades, the geometrical aspects determined by these foliations on the tangent manifold of a Finsler space were studied \cite{A-R}, \cite{C-C}, \cite{Ma}, \cite{P-T-Z}.  The foliated manifolds, and some couple of foliations, one of them being a subfoliation of the other, are also studied, related sometimes with cohomological theories, \cite{C-M}, \cite{D1}, \cite{D}, \cite{T}. Our present work intends to develop the study of the Finsler spaces and the foliated structures on the tangent manifold of such a space.

The paper is organized as follows: In the preliminary subsection we recall some basic facts on Finsler spaces and we present some natural foliations on the tangent manifold $TM^0$ of a Finsler space $(M,F)$, according to \cite{B-F}, \cite{Bej}. In the second section, using the vertical Liouville vector field $\Gamma$ and the natural almost complex structure $J$ on $TM^0$ we give an adapted basis in $T(TM^0)$. In the last section are given the main results of the paper. There is identified a $(n,2n-1)$-codimensional subfoliation $(\mathcal{F}_V,\mathcal{F}_{\Gamma})$ on the tangent manifold $TM^0$ of a Finsler space $(M,F)$, where $\mathcal{F}_V$ and $\mathcal{F}_{\Gamma}$ are the vertical foliation and the line foliation spanned by $\Gamma$, respectively. Firstly we make a general approach about basic connections on the normal bundles related to this subfoliation and next a triple of adapted basic connections with respect to this subfoliation is constructed. The methods used here are similarly  and closely related to those used in \cite{C-M} for the general study of $(q_1,q_2)$-subfoliations.

\subsection{Preliminaries and notations}
Let $(M,F)$ be a $n$-dimensional Finsler manifold with $(x^i, y^i),\,i=1,\ldots,n$ the local coordinates on $TM$ (for necessary definitions see for instance \cite{A-P, B-C-S, M-A}).

The vertical bundle $V(TM^0)$ of $TM^0=TM-\{{\rm zero\,\,section}\}$ is the tangent (structural) bundle to vertical foliation $\mathcal{F}_{V}$ determined by the fibers of $\pi:TM\rightarrow M$ and characterized by $x^k=const.$ on the leaves. Also, we locally have $V(TM^0)={\rm span}\left\{\frac{\partial}{\partial y^i}\right\},\,i=1,\ldots,n$.

A canonical transversal (also called horizontal) distribution to $V(TM^0)$ is constructed by Bejancu and Farran in \cite{B-F} pag. 225 or \cite{Bej} as
follows:

Let $(g^{ji}(x,y))_{i\times j}$ be the inverse matrix of $(g_{ij}(x,y))_{i\times j}$, where
\begin{equation}
g_{ij}(x,y)=\frac{1}{2}\frac{\partial^2F^2}{\partial y^i\partial y^j}(x,y),
\label{1}
\end{equation}
and $F$ is the fundamental function of the Finsler space.

If consider the local functions
\begin{equation}
G^i=\frac{1}{4}g^{ik}\left(\frac{\partial^2F^2}{\partial y^k\partial x^h}y^h-\frac{\partial F^2}{\partial x^k}\right),\,G^j_i=\frac{\partial G^j}{\partial y^i},
\label{2}
\end{equation}
then, there exists on $TM^0$ a $n$-distribution $H(TM^0)$ locally spanned by the vector fields
\begin{equation}
\frac{\delta}{\delta x^i}=\frac{\partial}{\partial x^i}-G^j_i\frac{\partial}{\partial y^j},\,i=1,\ldots,n.
\label{3}
\end{equation}
The local basis $\left\{\frac{\delta}{\delta x^i}, \frac{\partial}{\partial y^i}\right\},\,i=1,\ldots,n$ is called \textit{adapted} to vertical foliation $\mathcal{F}_{V}$ and we have the decomposition
\begin{equation}
T(TM^0)=H(TM^0)\oplus V(TM^0).
\label{4}
\end{equation}
If we consider the dual adapted bases $\{dx^i, \delta y^i=dy^i+G^i_jdx^j\}$, then the Riemannian metric $G$ on $TM^0$ given by the Sasaki lift of the fundamental metric tensor $g_{ij}$ from ({\ref{1}}) satisfies
\begin{equation}
G\left(\frac{\delta}{\delta x^i}, \frac{\delta}{\delta x^j}\right)=G\left(\frac{\partial}{\partial y^i}, \frac{\partial}{\partial y^j}\right)=g_{ij}\,,\,G\left(\frac{\delta}{\delta x^i}, \frac{\partial}{\partial y^j}\right)=0, \,i,j=1,\ldots,n.
\label{5}
\end{equation}
We also notice that there is  a natural almost complex structure on $TM^0$ which is compatible with $G$ and locally defined by
\begin{displaymath}
J=\frac{\delta}{\delta x^i}\otimes \delta y^i-\frac{\partial}{\partial y^i}\otimes dx^i\,,\,J\left(\frac{\delta}{\delta x^i}\right)=-\frac{\partial}{\partial y^i}\,,\,J\left(\frac{\partial}{\partial y^i}\right)=\frac{\delta}{\delta x^i}.
\end{displaymath}
According to \cite{B-M, M-A} we have that $(TM^0,G,J)$ is an almost K\"{a}hlerian manifold with the almost K\"{a}hler form given by
\begin{equation}
\Omega(X,Y)=G(JX,Y),\,\,\forall\,X,Y\in\mathcal{X}(TM^0).
\label{k1}
\end{equation}
It is easy to see that locally we have
\begin{equation}
\Omega=g_{ij}\delta y^i\wedge dx^i.
\label{k2}
\end{equation}

Now, let us consider $\Gamma=y^i\frac{\partial}{\partial y^i}$ the \textit{vertical Liouville vector field} (or radial vertical vector field) which is globally defined on $TM^0$. We also  consider $\mathcal{L}_{\Gamma}=span\,\{\Gamma\}$ the line distribution spanned by $\Gamma$ on $TM^0$ and the following complementary orthogonal distributions to $\mathcal{L}_{\Gamma}$ in $V(TM^0)$ and $T(TM^0)$, respectively
\begin{equation}
\mathcal{L}_{\Gamma}^{'}=\{X\in\Gamma(V(TM^0))\,:\,G(X,\Gamma)=0\},
\label{6}
\end{equation}
\begin{equation}
\mathcal{L}_{\Gamma}^{\perp}=\{X\in\Gamma(T(TM^0))\,:\,G(X,\Gamma)=0\}.
\label{7}
\end{equation}
In \cite{Bej} it is proved that both distributions $\mathcal{L}_{\Gamma}^{'}$ and $\mathcal{L}_{\Gamma}^{\perp}$ are integrable and we also have the decomposition
\begin{equation}
V(TM^0)=\mathcal{L}_{\Gamma}^{'}\oplus\mathcal{L}_{\Gamma}.
\label{8}
\end{equation}
Moreover, we have $\mathcal{L}_{\Gamma}^{\perp}=H(TM^0)\oplus\mathcal{L}_{\Gamma}^{'}$ and the following result:
\begin{proposition}
\begin{enumerate}
\item[i)] The foliation $\mathcal{F}_{\Gamma}^{\perp}$ determined by the distribution $\mathcal{L}_{\Gamma}^{\perp}$ is just the foliation determined by the level hypersurfaces of the fundamental function $F$ of the Finsler manifold, denoted by $\mathcal{F}_F$ and called \textit{the fundamental foliation} on $(TM^0,G)$.
\item[ii)] For every fixed point $x_0\in M$, the leaves of the vertical Liouville foliation $\mathcal{F}_{\Gamma}^{'}$ determined by the distribution $\mathcal{L}_{\Gamma}^{'}$ on $T_{x_0}M$ are just the $c$-indicatrices of $(M,F)$:
\begin{equation}
I_{x_0}M(c)=\{y\in T_{x_0}M\,:\,F(x_0,y)=c\}.
\label{9}
\end{equation}
\item[iii)] The foliation $\mathcal{F}_{\Gamma}^{'}$ is a subfoliation of the vertical foliation $\mathcal{F}_{V}$.
\end{enumerate}
\end{proposition}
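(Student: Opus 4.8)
The plan is to reduce all three assertions to one computation relating the metric pairing with $\Gamma$ to the differential of $F$. The sole analytic input is the homogeneity of $F^2$: since $F^2$ is positively homogeneous of degree two in the fibre variables, Euler's relation gives $y^i\frac{\partial F^2}{\partial y^i}=2F^2$, and differentiating once more yields $\frac{\partial F^2}{\partial y^j}=2g_{jk}y^k$, hence $\frac{\partial F}{\partial y^j}=\frac{1}{F}g_{jk}y^k$. Therefore, for any vertical field $X=X^j\frac{\partial}{\partial y^j}$, using \eqref{5} and $\Gamma=y^k\frac{\partial}{\partial y^k}$,
\begin{equation}
X(F)=X^j\frac{\partial F}{\partial y^j}=\frac{1}{F}\,g_{jk}X^jy^k=\frac{1}{F}\,G(X,\Gamma).
\label{key}
\end{equation}
In particular $G(\Gamma,\Gamma)=g_{jk}y^jy^k=F^2>0$ on $TM^0$, so $\Gamma(F)=F\neq 0$, $dF\neq 0$ everywhere, and the level sets $\{F=c\}$ are genuine hypersurfaces.

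For ii) I would fix $x_0\in M$ and work on the fibre $T_{x_0}M$, where every tangent vector is vertical. By \eqref{key}, a fibre-tangent vector $X$ satisfies $X(F)=0$ if and only if $G(X,\Gamma)=0$, i.e. if and only if $X\in\mathcal L_\Gamma'$ in the sense of \eqref{6}. Thus $\mathcal L_\Gamma'$ restricted to the fibre is exactly $\ker\big(dF|_{T_{x_0}M}\big)$, whose integral manifolds are the level sets $\{y\in T_{x_0}M:F(x_0,y)=c\}$, which are precisely the $c$-indicatrices \eqref{9}.

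For i) the same reasoning extends to all of $TM^0$ once the horizontal part of $dF$ is controlled. Writing $X=X^H+X^V$ relative to the decomposition \eqref{4}, the orthogonality $G(H(TM^0),V(TM^0))=0$ from \eqref{5} together with the verticality of $\Gamma$ gives $G(X,\Gamma)=G(X^V,\Gamma)$. The crucial ingredient is the horizontal constancy of $F$, namely $\frac{\delta F}{\delta x^i}=0$: this is the standard metrical (``deflection-free'') property of the canonical nonlinear connection \eqref{2}--\eqref{3}, and may be obtained directly from the definition of the $G^i$ or quoted from the Finsler literature. Granting it, $X^H(F)=X^i\frac{\delta F}{\delta x^i}=0$, so by \eqref{key}, $X(F)=X^V(F)=\frac{1}{F}G(X^V,\Gamma)=\frac{1}{F}G(X,\Gamma)$. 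Hence $\ker dF=\{X:G(X,\Gamma)=0\}=\mathcal L_\Gamma^\perp$ by \eqref{7}. Since $F$ is a submersion onto $(0,\infty)$, its level hypersurfaces foliate $TM^0$ with tangent distribution $\ker dF=\mathcal L_\Gamma^\perp$, which is exactly the statement $\mathcal F_\Gamma^\perp=\mathcal F_F$.

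Finally iii) is immediate from the definition \eqref{6}: $\mathcal L_\Gamma'$ consists of sections of $V(TM^0)$, so $T\mathcal F_\Gamma'=\mathcal L_\Gamma'\subseteq V(TM^0)=T\mathcal F_V$; equivalently, by ii) each leaf of $\mathcal F_\Gamma'$ is an indicatrix $I_{x_0}M(c)$ contained in the fibre $T_{x_0}M$, which is the leaf of $\mathcal F_V$ through $x_0$. Thus every leaf of $\mathcal F_\Gamma'$ lies inside a single leaf of $\mathcal F_V$, which is precisely the definition of a subfoliation. I expect the only real obstacle to be a clean justification of $\frac{\delta F}{\delta x^i}=0$; everything else follows formally from \eqref{key}, the orthogonal decompositions, and the defining equations.
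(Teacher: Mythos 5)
Your proof is correct, but be aware that the paper offers no proof of this proposition to compare against: it is imported verbatim from Bejancu--Farran \cite{Bej}, with the integrability of $\mathcal{L}_{\Gamma}^{'}$ and $\mathcal{L}_{\Gamma}^{\perp}$ likewise only asserted by citation. Your reduction of everything to the identity $X(F)=\frac{1}{F}G(X,\Gamma)$ for vertical $X$ (a direct consequence of \eqref{5} and \eqref{10}) is the right mechanism: it settles ii) and iii) completely, and it also yields the integrability statements for free, since $\mathcal{L}_{\Gamma}^{\perp}$ is exhibited as $\ker dF$ with $dF$ nowhere zero and $\mathcal{L}_{\Gamma}^{'}$ as the kernel of $dF$ restricted to each fibre. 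The one step you defer, $\frac{\delta F}{\delta x^i}=0$, is genuinely the whole content of i) beyond the vertical computation, so since you are proving a statement the paper merely quotes you should carry it out rather than cite it; it does follow from \eqref{2}, \eqref{3}, Euler's theorem and the identities \eqref{10}. Indeed, with $L=F^2$ and $G_k=g_{kj}G^j$, the definition \eqref{2} and the degree-two homogeneity of $\frac{\partial L}{\partial x^h}$ give $y^kG_k=\frac{1}{4}y^h\frac{\partial L}{\partial x^h}$, hence
\begin{displaymath}
\frac{\partial G^j}{\partial y^i}\,g_{jk}y^k=\frac{\partial}{\partial y^i}\left(y^kG_k\right)-G_i=\frac{1}{4}\left(\frac{\partial L}{\partial x^i}+y^h\frac{\partial^2L}{\partial y^i\partial x^h}\right)-G_i=\frac{1}{2}\frac{\partial L}{\partial x^i},
\end{displaymath}
where the first equality uses $y^k\frac{\partial g_{jk}}{\partial y^i}=0$ from \eqref{10}; therefore $G^j_i\frac{\partial L}{\partial y^j}=2G^j_i\,g_{jk}y^k=\frac{\partial L}{\partial x^i}$, i.e. $\frac{\delta L}{\delta x^i}=0$. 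With this supplied, your identification $\ker dF=\mathcal{L}_{\Gamma}^{\perp}$, and hence $\mathcal{F}_{\Gamma}^{\perp}=\mathcal{F}_F$, is complete.
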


\section{An adapted basis in $T(TM^0)$}
\setcounter{equation}{0}
In this section, using the vertical Liouville vector field $\Gamma$ and the natural almost complex structure $J$ on $TM^0$, we give an adapted basis in $T(TM^0)$.

There are some useful facts which follow from the homogeneity condition of the fundamental function of the Finsler manifold $(M,F)$. By the Euler theorem on positively homogeneous functions we have, \cite{Bej}:
\begin{equation}
	F^2=y^iy^jg_{ij},\, \frac{\partial F}{\partial y^k}=\frac{1}{F}y^ig_{ki}, \, y^i\frac{\partial g_{ij}}{\partial y^k}=0,\,  k=1,\ldots,n.
\label{10}
\end{equation}
Hence it results
\begin{equation}
G(\Gamma,\Gamma)=F^2.
\label{11}
\end{equation}
As we already saw, the vertical bundle is locally spanned by $\left\{\frac{\partial}{\partial y^i}\right\},\,i=1,\ldots,n$ and it admits decomposition \eqref{8}. In the sequel, according to \cite{Ma1},  we give another basis on $V(TM^0)$, adapted to $\mathcal{F}_{\Gamma}^{'}$, and following \cite{A-R}, \cite{Ma1} we extend this basis to an adapted basis in $T(TM^0)$.

We consider the following vertical vector fields:
\begin{equation}
\frac{\overline{\partial}}{\overline{\partial} y^k}=\frac{\partial}{\partial y^k}-t_k\Gamma,\, k=1,\ldots,n,
\label{II1}
\end{equation}
where functions $t_k$ are defined by the conditions
\begin{equation}
G\left(\frac{\overline{\partial}}{\overline{\partial} y^k},\Gamma\right)=0, \forall\, k=1,\ldots,n.
\label{II2}
\end{equation}
The above conditions become
\begin{displaymath}
G\left(\frac{\partial}{\partial y^k}, y^i\frac{\partial}{\partial y^i}\right)-t_kG(\Gamma,\Gamma)=0
\end{displaymath}
so, taking into account also \eqref{5} and \eqref{11}, we obtain the local expression of functions $t_k$ in a local chart $(U,(x^i,y^i))$:
\begin{equation}
t_k=\frac{1}{F^2}y^ig_{ki}=\frac{1}{F}\frac{\partial F}{\partial y^k},\,\forall\, k=1,\ldots,n.
\label{II3}
\end{equation}
If $(\widetilde{U},(\widetilde{x}^{i_1},\widetilde{y}^{i_1}))$ is another local chart on $TM^0$, in $U \cap \widetilde{U} \neq \emptyset $, then we have:
\begin{displaymath}	\widetilde{t}_{k_1}=\frac{1}{F^2}\widetilde{y}^{i_1}\widetilde{g}_{i_1k_1}=\frac{1}{F^2}\frac{\partial \widetilde{x}^{i_1}}{\partial x^i}y^i \frac{\partial x^k}{\partial \widetilde{x}^{k_1}}\frac{\partial x^j}{\partial \widetilde{x}^{i_1}}g_{kj}=\frac{\partial x^k}{\partial \widetilde{x}^{k_1}}t_k.
\end{displaymath}
So, we obtain the following changing rule for the vector fields \eqref{II1}:
\begin{equation}
\frac{\overline{\partial}}{\overline{\partial} \widetilde{y}^{i_1}}=\frac{\partial x^k}{\partial \widetilde{x}^{i_1}}\frac{\overline{\partial}}{\overline{\partial} y^k},\, \forall\, i_1=1,\ldots, n.
	\label{II4}
\end{equation}
By a straightforward calculation, using \eqref{10}, it results:
\begin{proposition}
\label{p2.1}
(\cite{Ma1}). The functions $\{t_k\},\,k=1,\ldots,n$ defined by \eqref{II3} are satisfying:
\begin{enumerate}
	\item[i)] $y^it_i=1,\, y^i \frac{\overline{\partial}}{\overline{\partial} y^i}=0$;
\item[ii)] $\frac{\partial t_l}{\partial y^k}=-2t_kt_l+\frac{1}{F^2} g_{kl},\, \Gamma t_k=-t_k,\, \forall\, k,l=1,\ldots,n$;
\item[iii)]	$ y^j\frac{\partial t_j}{\partial y^i}=-t_i,\, \forall\, i=1,\ldots,n,\, y^i(\Gamma t_i)=-1$.
\end{enumerate}
\end{proposition}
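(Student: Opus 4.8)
The plan is to treat part ii) as the computational heart and to recover parts i) and iii) as contractions of its identities against $y^k$; consequently I would prove i) first, since it is needed in ii) and iii). For i), I would start from the definition \eqref{II3}, writing $t_k=\frac{1}{F^2}y^ig_{ki}$, and contract with $y^k$ to get $y^it_i=\frac{1}{F^2}y^iy^jg_{ij}=1$ via the first homogeneity relation $F^2=y^iy^jg_{ij}$ in \eqref{10}. The second identity is then immediate: by \eqref{II1} and $\Gamma=y^i\frac{\partial}{\partial y^i}$,
\[y^i\frac{\overline{\partial}}{\overline{\partial} y^i}=y^i\frac{\partial}{\partial y^i}-(y^it_i)\,\Gamma=\Gamma-\Gamma=0.\]

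For ii), the key step is to differentiate $t_l=\frac{1}{F^2}y^ig_{li}$ in $y^k$ by the product rule, which requires two auxiliary facts. First, that $\frac{\partial F^2}{\partial y^k}=2y^ig_{ik}$: this follows from Euler's theorem applied to the degree-one homogeneous function $\frac{\partial F^2}{\partial y^k}$ together with $g_{ij}=\frac{1}{2}\frac{\partial^2F^2}{\partial y^i\partial y^j}$ from \eqref{1}, whence $\frac{\partial}{\partial y^k}\frac{1}{F^2}=-\frac{2}{F^4}y^ig_{ik}=-\frac{2}{F^2}t_k$. Second, that $\frac{\partial}{\partial y^k}(y^ig_{li})=g_{lk}+y^i\frac{\partial g_{li}}{\partial y^k}=g_{lk}$, the last term vanishing by the third relation in \eqref{10}, namely $y^i\frac{\partial g_{ij}}{\partial y^k}=0$, applied after using the symmetry $g_{li}=g_{il}$. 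Combining these and replacing $y^ig_{li}=F^2t_l$ yields $\frac{\partial t_l}{\partial y^k}=-2t_kt_l+\frac{1}{F^2}g_{kl}$. The relation $\Gamma t_k=-t_k$ then follows by contracting this formula with $y^k$: using $y^kt_k=1$ from i) and $\frac{1}{F^2}y^kg_{kl}=t_l$, one obtains $\Gamma t_l=y^k\frac{\partial t_l}{\partial y^k}=-2t_l+t_l=-t_l$.

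Part iii) is obtained by the same contraction. Contracting the derivative formula of ii) against $y^j$ in its first lower index gives $y^j\frac{\partial t_j}{\partial y^i}=-2t_i(y^jt_j)+\frac{1}{F^2}y^jg_{ij}=-2t_i+t_i=-t_i$, again via i). Finally $y^i(\Gamma t_i)=y^i(-t_i)=-y^it_i=-1$, using $\Gamma t_i=-t_i$ from ii) and $y^it_i=1$ from i). I do not anticipate a genuine obstacle, as the whole statement reduces to the differentiation in ii) and two contractions; the only point requiring care is the correct bookkeeping of the symmetry of $g_{ij}$ when invoking $y^i\frac{\partial g_{ij}}{\partial y^k}=0$, since in $t_l$ the summation index sits in the second slot of $g_{li}$ rather than the first.
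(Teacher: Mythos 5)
Your proof is correct and takes exactly the route the paper intends: the paper states Proposition \ref{p2.1} with only the remark that it follows ``by a straightforward calculation, using \eqref{10}'' (deferring details to \cite{Ma1}), and your computation---contracting $t_k=\frac{1}{F^2}y^ig_{ki}$ with $y^k$ for i), differentiating it with the product rule plus the homogeneity identities for ii), and recovering iii) by further contractions---is precisely that calculation, with the symmetry of $g_{ij}$ handled correctly.
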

\begin{proposition}
\label{p2.2}
(\cite{Ma1}). The following relations hold:
\begin{equation}
\left[\frac{\overline{\partial}}{\overline{\partial} y^i},\frac{\overline{\partial}}{\overline{\partial} y^j}\right]=t_i\frac{\overline{\partial}}{\overline{\partial} y^j}-t_j\frac{\overline{\partial}}{\overline{\partial} y^i},\,\,
\left[\frac{\overline{\partial}}{\overline{\partial} y^i},\Gamma\right]=\frac{\overline{\partial}}{\overline{\partial} y^i},
\label{II5}
\end{equation}
for all $i,j=\overline{1,n}$.
\end{proposition}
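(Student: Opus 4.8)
The plan is to compute both brackets directly from the definition $\frac{\overline{\partial}}{\overline{\partial} y^k}=\frac{\partial}{\partial y^k}-t_k\Gamma$ in \eqref{II1}, using the Leibniz-type identities $[fX,Y]=f[X,Y]-(Yf)X$ and $[X,gY]=g[X,Y]+(Xg)Y$ together with the derivative formulas for $t_k$ recorded in Proposition~\ref{p2.1}. The computational backbone is the auxiliary bracket $\left[\frac{\partial}{\partial y^i},\Gamma\right]=\frac{\partial}{\partial y^i}$: writing $\Gamma=y^k\frac{\partial}{\partial y^k}$ and applying both fields to a test function, the second-order derivatives cancel and only the contribution of $\frac{\partial y^k}{\partial y^i}=\delta_i^k$ survives, which I would establish first.

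For the relation $\left[\frac{\overline{\partial}}{\overline{\partial} y^i},\Gamma\right]=\frac{\overline{\partial}}{\overline{\partial} y^i}$ I would expand $\left[\frac{\partial}{\partial y^i}-t_i\Gamma,\Gamma\right]$. The first piece equals $\frac{\partial}{\partial y^i}$ by the auxiliary bracket, while $[t_i\Gamma,\Gamma]=t_i[\Gamma,\Gamma]-(\Gamma t_i)\Gamma=-(\Gamma t_i)\Gamma=t_i\Gamma$, where I use $\Gamma t_i=-t_i$ from Proposition~\ref{p2.1}~ii). Subtracting yields $\frac{\partial}{\partial y^i}-t_i\Gamma=\frac{\overline{\partial}}{\overline{\partial} y^i}$, as claimed.

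For the first relation I would expand $\left[\frac{\overline{\partial}}{\overline{\partial} y^i},\frac{\overline{\partial}}{\overline{\partial} y^j}\right]$ by bilinearity into four brackets. Here $\left[\frac{\partial}{\partial y^i},\frac{\partial}{\partial y^j}\right]=0$, and $[t_i\Gamma,t_j\Gamma]$ vanishes as well because its two surviving terms $t_i(\Gamma t_j)\Gamma$ and $t_j(\Gamma t_i)\Gamma$ coincide by $\Gamma t_k=-t_k$. The two mixed brackets produce $-t_j\frac{\partial}{\partial y^i}+t_i\frac{\partial}{\partial y^j}$ plus a vertical Liouville component $\left(\frac{\partial t_i}{\partial y^j}-\frac{\partial t_j}{\partial y^i}\right)\Gamma$. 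This $\Gamma$-component is the one point I expect to require care, since a nonzero value would prevent the bracket from closing inside the frame $\{\frac{\overline{\partial}}{\overline{\partial} y^k}\}$; it is exactly the symmetry $g_{ij}=g_{ji}$ that resolves it, because substituting $\frac{\partial t_l}{\partial y^k}=-2t_kt_l+\frac{1}{F^2}g_{kl}$ from Proposition~\ref{p2.1}~ii) makes $\frac{\partial t_i}{\partial y^j}$ symmetric in $i$ and $j$, so the coefficient of $\Gamma$ is zero. Finally I would substitute $\frac{\partial}{\partial y^k}=\frac{\overline{\partial}}{\overline{\partial} y^k}+t_k\Gamma$ into the remaining terms; the extra $\pm t_it_j\Gamma$ pieces cancel, leaving $t_i\frac{\overline{\partial}}{\overline{\partial} y^j}-t_j\frac{\overline{\partial}}{\overline{\partial} y^i}$, which is the desired identity.
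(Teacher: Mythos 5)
Your computation is correct: the auxiliary bracket $\left[\frac{\partial}{\partial y^i},\Gamma\right]=\frac{\partial}{\partial y^i}$, the use of $\Gamma t_k=-t_k$ to get $[t_i\Gamma,\Gamma]=t_i\Gamma$ and to kill $[t_i\Gamma,t_j\Gamma]$, and the observation that $\frac{\partial t_i}{\partial y^j}=-2t_it_j+\frac{1}{F^2}g_{ij}$ is symmetric in $i,j$ (so the $\Gamma$-component cancels) all check out. The paper itself gives no proof of this proposition, only the citation to \cite{Ma1}, and your direct expansion via the Leibniz rules is exactly the straightforward calculation that reference performs, so there is nothing further to compare.
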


By conditions \eqref{II2}, the vector fields $\left\{\frac{\overline{\partial}}{\overline{\partial} y^1},\ldots,\frac{\overline{\partial}}{\overline{\partial} y^n}\right\}$ are orthogonal to $\Gamma$, so they belong to the $(n-1)$-dimensional distribution $\mathcal{L}^{'}_{\Gamma}$. It results that they are linear dependent and, from Proposition \ref{p2.1} i), we have 
\begin{equation}
\frac{\overline{\partial}}{\overline{\partial} y^n}=-\frac{1}{y^n}y^a\frac{\overline{\partial}}{\overline{\partial} y^a},
\label{II6}
\end{equation}
since the local coordinate $y^n$ is nonzero everywhere. 

We also proved that, \cite{Ma1}:
\begin{proposition}
\label{p2.3}
The system $\left\{\frac{\overline{\partial}}{\overline{\partial} y^1},\ldots,\frac{\overline{\partial}}{\overline{\partial} y^{n-1}},\Gamma\right\}$ of vertical vector fields is a locally adapted basis to the vertical Liouville foliation $\mathcal{F}_{\Gamma}^{'}$, on $V(TM^0)$.
\end{proposition}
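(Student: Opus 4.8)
The plan is to verify the two conditions that make a local frame \emph{adapted} to the subfoliation $\mathcal{F}_{\Gamma}^{'}$ inside $V(TM^0)$: first, that the $n-1$ vector fields $\frac{\overline{\partial}}{\overline{\partial} y^1},\ldots,\frac{\overline{\partial}}{\overline{\partial} y^{n-1}}$ form a local frame for the structural distribution $\mathcal{L}_{\Gamma}^{'}$ tangent to the leaves of $\mathcal{F}_{\Gamma}^{'}$; and second, that adjoining $\Gamma$ yields a local frame for the whole vertical bundle $V(TM^0)$, with $\Gamma$ spanning the line distribution $\mathcal{L}_{\Gamma}$ complementary to $\mathcal{L}_{\Gamma}^{'}$ in the decomposition \eqref{8}. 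Since $\dim\mathcal{L}_{\Gamma}^{'}=n-1$ and $\dim V(TM^0)=n$, both conditions will reduce to a single pointwise linear-independence statement.

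First I would observe that each $\frac{\overline{\partial}}{\overline{\partial} y^k}$ is by construction a vertical field satisfying $G\!\left(\frac{\overline{\partial}}{\overline{\partial} y^k},\Gamma\right)=0$, which is exactly the defining condition \eqref{II2}, so it lies in $\mathcal{L}_{\Gamma}^{'}$. Hence the whole family $\left\{\frac{\overline{\partial}}{\overline{\partial} y^1},\ldots,\frac{\overline{\partial}}{\overline{\partial} y^{n-1}}\right\}$ sits inside the $(n-1)$-dimensional distribution $\mathcal{L}_{\Gamma}^{'}$, and it suffices to show these $n-1$ fields are linearly independent; they will then automatically span $\mathcal{L}_{\Gamma}^{'}$ for dimensional reasons.

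For the independence I would argue as follows. Suppose $\sum_{a=1}^{n-1}c^a\frac{\overline{\partial}}{\overline{\partial} y^a}+c^n\Gamma=0$ at a point. Taking the $G$-product with $\Gamma$ and using \eqref{II2} together with $G(\Gamma,\Gamma)=F^2\neq 0$ from \eqref{11} annihilates the first sum and forces $c^n=0$. It then remains to treat $\sum_{a=1}^{n-1}c^a\frac{\overline{\partial}}{\overline{\partial} y^a}=0$. Substituting the definition \eqref{II1} in the form $\frac{\overline{\partial}}{\overline{\partial} y^a}=\frac{\partial}{\partial y^a}-t_a y^i\frac{\partial}{\partial y^i}$ and collecting coefficients in the coordinate frame $\left\{\frac{\partial}{\partial y^i}\right\}$, the coefficient of $\frac{\partial}{\partial y^n}$ reads $-\!\left(\sum_a c^a t_a\right)y^n=0$; since $y^n\neq 0$ everywhere on the chart, this gives $\sum_a c^a t_a=0$, whereupon the coefficient of $\frac{\partial}{\partial y^a}$ for $a\le n-1$ collapses to $c^a=0$. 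Thus all coefficients vanish.

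This simultaneously shows that $\left\{\frac{\overline{\partial}}{\overline{\partial} y^1},\ldots,\frac{\overline{\partial}}{\overline{\partial} y^{n-1}}\right\}$ is a frame for $\mathcal{L}_{\Gamma}^{'}$ and that, with $\Gamma$ appended, we obtain $n$ independent vectors in the $n$-dimensional bundle $V(TM^0)$, hence a basis; the split into the first $n-1$ fields and $\Gamma$ matches the orthogonal decomposition \eqref{8}, so the basis is indeed adapted to $\mathcal{F}_{\Gamma}^{'}$. The one genuinely delicate point is the passage from the \emph{dependent} family of all $n$ fields $\frac{\overline{\partial}}{\overline{\partial} y^k}$ (they satisfy the relation \eqref{II6} precisely because they all lie in an $(n-1)$-plane) to the independence of the truncated family of $n-1$ of them; this is exactly where the hypothesis $y^n\neq 0$ enters, and it is the only place the particular choice of the discarded index $n$ is used.
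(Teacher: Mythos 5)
Your proof is correct, but it takes a different route from the paper's presentation. You prove the pointwise statement directly in a fixed chart: membership in $\mathcal{L}_{\Gamma}^{'}$ via the defining orthogonality \eqref{II2}, elimination of the $\Gamma$-coefficient by pairing with $\Gamma$ and using $G(\Gamma,\Gamma)=F^2\neq 0$, and then linear independence of $\frac{\overline{\partial}}{\overline{\partial} y^1},\ldots,\frac{\overline{\partial}}{\overline{\partial} y^{n-1}}$ by expanding in the coordinate frame and reading off the $\frac{\partial}{\partial y^n}$-coefficient, which is where $y^n\neq 0$ enters; this computation is sound, and the dimension count then finishes the spanning claim. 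The paper, by contrast, defers the frame property itself to the reference [Ma1] and devotes its in-text argument to the inter-chart behaviour: using \eqref{II4} and \eqref{II6} it writes the transition between the adapted bases of two overlapping charts (one privileging $y^n\neq 0$, the other $\widetilde{y}^k\neq 0$) and computes the determinant of the change-of-basis matrix to be $(-1)^{n+k}\frac{\widetilde{y}^k}{y^n}\det\left(\frac{\partial x^i}{\partial \widetilde{x}^j}\right)\neq 0$, which is what justifies calling the construction a \emph{locally} adapted basis independent of the chart and of the discarded index. So your argument supplies the single-chart content that the paper outsources, while the paper's computation supplies the globalization/compatibility across charts that your argument does not address; a fully self-contained treatment would combine the two, but neither half is wrong.
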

More clearly, let $\left(\widetilde{U},(\widetilde{x}^{i_1},\widetilde{y}^{i_1})\right), \left(U,(x^i,y^i)\right)$ be two local charts which domains overlap, where $\widetilde{y}^k$ and $y^n$ are nonzero functions (in every local charts on $TM^0$ there is at least one nonzero coordinate function $y^i$). 

The adapted basis in $\widetilde{U}$ is $\left\{\frac{\overline{\partial}}{\overline{\partial}\widetilde{y}^1},\ldots,\frac{\overline{\partial}}{\overline{\partial}\widetilde{y}^{k-1}},\frac{\overline{\partial}}{\overline{\partial}\widetilde{y}^{k+1}},\ldots,\frac{\overline{\partial}}{\overline{\partial}\widetilde{y}^n},\Gamma\right\}$. In $U\cap \widetilde{U}$ we have relations \eqref{II4} and \eqref{II6}, hence
\begin{displaymath}
\frac{\overline{\partial}}{\overline{\partial}\widetilde{y}^{i_1}}=\sum_{i=1}^{n-1}\left(\frac{\partial x^i}{\partial \widetilde{x}^{i_1}}-\frac{y^i}{y^n}\frac{\partial x^n}{\partial \widetilde{x}^{i_1}}\right)\frac{\overline{\partial}}{\overline{\partial} y^i},\, \frac{\overline{\partial}}{\overline{\partial} y^j}=\sum_{j_1=1, j_1 \neq k}^n\left(\frac{\partial \widetilde{x}^{j_1}}{\partial x^j}-\frac{\widetilde{y}^{j_1}}{\widetilde{y}^k}\frac{\partial \widetilde{x}^k}{\partial x^j}\right)\frac{\overline{\partial}}{\overline{\partial} \widetilde{y}^{j_1}},
\end{displaymath}
for all $i_1=1,\ldots,n$, $i_1\neq k$, $j=1,\ldots,n-1$. We can see that the above relations also imply
\begin{displaymath}
\frac{\partial x^i}{\partial \widetilde{x}^{k}}-\frac{y^i}{y^n}\frac{\partial x^n}{\partial \widetilde{x}^{k}}=-\sum_{i_1=1,i_1 \neq k}^{n}\frac{\widetilde{y}^{i_1}}{\widetilde{y}^k}\left(\frac{\partial x^i}{\partial \widetilde{x}^{i_1}}-\frac{y^i}{y^n}\frac{\partial x^n}{\partial \widetilde{x}^{i_1}}\right).
\end{displaymath}
By a straightforward calculation we have that the changing matrix of basis 
\begin{displaymath}
\left\{\frac{\overline{\partial}}{\overline{\partial} y^1},\ldots,\frac{\overline{\partial}}{\overline{\partial} y^{n-1}},\Gamma\right\} \rightarrow \left\{\frac{\overline{\partial}}{\overline{\partial} \widetilde{y}^1},\ldots,\frac{\overline{\partial}}{\overline{\partial} \widetilde{y}^{k-1}}, \frac{\overline{\partial}}{\overline{\partial} \widetilde{y}^{k+1}},\ldots,\frac{\overline{\partial}}{\overline{\partial} \widetilde{y}^n},\Gamma\right\}
\end{displaymath}
on $V(TM^0)=\mathcal{L}_{\Gamma}'\oplus \mathcal{L}_{\Gamma}$ has the determinant equal to 
$(-1)^{n+k}\frac{\widetilde{y}^k}{y^n} \det\left(\frac{\partial x^i}{\partial \widetilde{x}^j}\right)_{i,j=\overline{1,n}}$.

Also, we can denote 
\begin{displaymath}
\frac{\overline{\partial}}{\overline{\partial}y^a}=E_a^i\frac{\partial}{\partial y^i},\,a =1,\ldots,n-1,
\end{displaymath}
where $rank\,E_a^i=n-1$ and $E_a^iy^jg_{ij}=0$.

Now, using the natural almost complex structure $J$ on $T(TM^0)$, the new local vector field frame in $T(TM^0)$ is 
\begin{equation}
\left\{\frac{\overline{\delta}}{\overline{\delta} x^a}, \xi,\frac{\overline{\partial}}{\overline{\partial} y^a},\Gamma\right\},
\label{II7}
\end{equation} 
where 
\begin{displaymath}
\xi=J\Gamma=y^i\frac{\delta}{\delta x^i}\,,\,\frac{\overline{\delta}}{\overline{\delta} x^a}=J\frac{\overline{\partial}}{\overline{\partial} y^a}=E_a^i\frac{\delta}{\delta x^i}.
\end{displaymath}
Since the vertical Liouville vector field $\Gamma$ is orthogonal to the level hypersurfaces of the fundamental function $F$, the vector fields $\left\{\frac{\overline{\delta}}{\overline{\delta} x^a}, \xi,\frac{\overline{\partial}}{\overline{\partial} y^a}\right\}$ are tangent to these hypersurfaces in $TM^0$, so they generate the distribution $\mathcal{L}_{\Gamma}^{\perp}$. The indicatrix distribution $\mathcal{L}_{\Gamma}^{'}$ is locally generated by $\left\{\frac{\overline{\partial}}{\overline{\partial} y^a}\right\}$, and the line distribution $\mathcal{L}_{\Gamma}$ is spanned by $\Gamma$. Also, the vertical foliation has the structural bundle locally generated by $\left\{\frac{\overline{\partial}}{\overline{\partial} y^a},\Gamma\right\}$. 

Finally, we notice that if we consider the line distribution $\mathcal{L}_{\xi}$ spanned by the horizontal Liouville vector field $\xi$ then in \cite{Bej} is proved that the distribution $\mathcal{L}_{\Gamma}$, $\mathcal{L}_{\xi}$ and $\mathcal{L}_{\Gamma}\oplus\mathcal{L}_{\xi}$, respectively, are integrable and their foliations $\mathcal{F}_{\Gamma}$, $\mathcal{F}_{\xi}$ and $\mathcal{F}_{\Gamma\oplus\xi}$, respectively, are totally geodesic foliations on $(TM^0,G)$.

Also denoting by $\mathcal{L}^{'}_{\xi}$ the complement of $\mathcal{L}_{\xi}$ in $H(TM^0)$, then it is easy to see that $\mathcal{L}^{'}_{\xi}$ is locally spanned by the vector fields $\left\{\frac{\overline{\delta}}{\overline{\delta} x^a}\right\}$ and we have the decomposition
\begin{equation}
\mathcal{L}_{\Gamma}^{\perp}=\mathcal{L}_{\xi}\oplus\mathcal{L}_{\xi}^{'}\oplus\mathcal{L}_{\Gamma}^{'}.
\label{II8}
\end{equation}
\begin{remark}
In computation we shall replace the local basis $\left\{\frac{\overline{\partial}}{\overline{\partial} y^a}\right\},\,a=1,\ldots,n-1$ by the system  $\left\{\frac{\overline{\partial}}{\overline{\partial} y^i}\right\},\,i=1,\ldots,n$ taking into account relation \eqref{II6} for some easier calculations. 
\end{remark}

\section{Subfoliations in the tangent manifold of a Finsler space}
\setcounter{equation}{0}

In this section, following \cite{C-M}, we briefly recall the notion of a $(q_1,q_2)$-codimensional subfoliation on a manifold and we identify a $(n,2n-1)$-codimensional subfoliation $(\mathcal{F}_V,\mathcal{F}_{\Gamma})$ on the tangent manifold $TM^0$ of a Finsler space $(M,F)$, where $\mathcal{F}_V$ is the vertical foliation and $\mathcal{F}_{\Gamma}$ is the line foliation spanned by the vertical Liouville vector field $\Gamma$. Firstly we make a general approach about basic connections on the normal bundles related to this subfoliation and next a triple of adapted basic connections with respect to this subfoliation is given.

\begin{definition}
Let $M$ be a $n$-dimensional manifold and $TM$ its tangent bundle. A \textit{$(q_1,q_2)$-codimensional subfoliation} on $M$ is a couple $(F_1,F_2)$ of integrable subbundles $F_k$ of $TM$ of dimension $n-q_k$, $k=1,2$ and $F_2$ being at the same time a subbundle of $F_1$.
\end{definition}
For a subfoliation $(F_1,F_2)$, its normal bundle is defined as $Q(F_1,F_2)=QF_{21}\oplus QF_1$, where $QF_{21}$ is the quotient bundle $F_1/F_2$ and $QF_1$ is the usual normal bundle of $F_1$. So, an exact sequence of vector bundles
\begin{equation}
0\longrightarrow QF_{21}\stackrel{i}{\longrightarrow} QF_2\stackrel{\pi}{\longrightarrow}QF_1\longrightarrow0
\label{III1}
\end{equation}
appears in a canonical way.

Also if we consider the canonical exact sequence associated to the foliation given by an integrable subbundle $F$, namely
\begin{displaymath}
0\longrightarrow F\stackrel{i_F}{\longrightarrow}TM\stackrel{\pi_F}{\longrightarrow}QF\longrightarrow0
\end{displaymath}
then we recall that a connection $\nabla:\Gamma(TM)\times\Gamma(QF)\rightarrow\Gamma(QF)$ on the normal bundle $QF$ is said to be \textit{basic} if 
\begin{equation}
\nabla_XY=\pi_F[X,\widetilde{Y}]
\label{III2}
\end{equation} 
for any $X\in\Gamma(F)$, $\widetilde{Y}\in\Gamma(TM)$ such that $\pi_F(\widetilde{Y})=Y$.

Similarly, for a $(q_1,q_2)$-subfoliation $(F_1,F_2)$ we can consider the following exact sequence of vector bundles
\begin{equation}
0\longrightarrow F_2\stackrel{i_0}{\longrightarrow}F_1\stackrel{\pi_0}{\longrightarrow}QF_{21}\longrightarrow0
\label{III3}
\end{equation}
and according to \cite{C-M} a connection $\nabla$ on $QF_{21}$ is said to be basic with respect to the subfoliation $(F_1,F_2)$ if
\begin{equation}
\nabla_XY=\pi_0[X,\widetilde{Y}]
\label{III4}
\end{equation}
for any $X\in\Gamma(F_2)$ and $\widetilde{Y}\in\Gamma(F_1)$ such that $\pi_0(\widetilde{Y})=Y$.

\subsection{A $(n,2n-1)$-codimensional subfoliation $(\mathcal{F}_V,\mathcal{F}_{\Gamma})$ of $(TM^0,G)$}

Taking into account the discussion from the previous section, for a $n$-dimensional Finsler manifold $(M,F)$, we have on the $2n$-dimensional tangent manifold $TM^0$ a $(n,2n-1)$-codimensional foliation $(\mathcal{F}_V,\mathcal{F}_{\Gamma})$. We also notice that the metric structure $G$ on $TM^0$ given by \eqref{5} is compatible with the subfoliated structure, that is 
\begin{displaymath}
Q\mathcal{F}_V\cong H(TM^0),\,Q\mathcal{F}_{\Gamma}\cong \mathcal{L}_{\Gamma}^{\perp},\,V(TM^0)/\mathcal{L}_{\Gamma}\cong \mathcal{L}_{\Gamma}^{'}.
\end{displaymath} 
Let us consider the following exact sequences associated to the subfoliation $(\mathcal{F}_V,\mathcal{F}_{\Gamma})$ 
\begin{displaymath}
0 \longrightarrow {\mathcal{L}_{\Gamma}} \stackrel{i_0}{\longrightarrow} V(TM^0) \stackrel{\pi_0}{\longrightarrow}{\mathcal{L}_{\Gamma}^{'}} \longrightarrow 0,
\end{displaymath}
and to foliations $\mathcal{F}_V$ and $\mathcal{F}_{\Gamma}$, respectively
\begin{displaymath}
0 \longrightarrow V(TM^0) \stackrel{i_1}{\longrightarrow} T(TM^0) \stackrel{\pi_1}{\longrightarrow}H(TM^0) \longrightarrow 0,
\end{displaymath}
\begin{displaymath}
0 \longrightarrow {\mathcal{L}_{\Gamma}} \stackrel{i_2}{\longrightarrow} T(TM^0) \stackrel{\pi_2}{\longrightarrow}{\mathcal{L}_{\Gamma}^{\perp}} \longrightarrow 0,
\end{displaymath}
where $i_0,i_1,i_2$, $\pi_0,\pi_1,\pi_2$ are the canonical inclusions and projections, respectively. 

A triple $(\nabla^1, \nabla^2,\nabla)$ of basic connections on  normal bundles $\mathcal{L}_{\Gamma}^{'}$, $H(TM^0)$, $\mathcal{L}_{\Gamma}^{\perp}$, respectively, is called in \cite{C-M} \textit{adapted} to the subfoliation $(\mathcal{F}_{V},\mathcal{F}_{\Gamma})$.

Our goal is to determine such a triple of connections, adapted to this subfoliation.

By \eqref{III4} a connection $\nabla^1$ on $\mathcal{L}_{\Gamma}^{'}$ is basic with respect to the subfoliation $(\mathcal{F}_V,\mathcal{F}_{\Gamma})$ if 
\begin{equation}
\nabla^1_XZ=\pi_0[X,\widetilde{Z}],\, \forall\, X\in \Gamma(\mathcal{L}_{\Gamma}),\, \forall\, \widetilde{Z}\in \Gamma(V(TM^0)),\,\pi_0(\widetilde{Z})=Z.
\label{III5}
\end{equation}
\begin{proposition}
\label{p3.1}
A connection $\nabla^1$ on $\mathcal{L}_{\Gamma}^{'}$ is basic if and only if 
\begin{displaymath}
\nabla^1_{\Gamma}Z=[\Gamma,Z],\, \forall\, Z\in \Gamma(\mathcal{L}_{\Gamma}^{'}).
\end{displaymath}
\end{proposition}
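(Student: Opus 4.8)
The plan is to unwind the defining relation \eqref{III5} of a basic connection using the facts that $\mathcal{L}_{\Gamma}$ is a line bundle spanned by $\Gamma$ and that $\pi_0$ is the projection of $V(TM^0)=\mathcal{L}_{\Gamma}^{'}\oplus\mathcal{L}_{\Gamma}$ onto $\mathcal{L}_{\Gamma}^{'}$ along $\mathcal{L}_{\Gamma}$. The geometric heart of the argument is a single bracket identity, which I would isolate first: for every $Z\in\Gamma(\mathcal{L}_{\Gamma}^{'})$ one has $[\Gamma,Z]\in\Gamma(\mathcal{L}_{\Gamma}^{'})$, and consequently $\pi_0[\Gamma,Z]=[\Gamma,Z]$. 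To see this, expand $Z=Z^a\frac{\overline{\partial}}{\overline{\partial}y^a}$ in the adapted frame of $\mathcal{L}_{\Gamma}^{'}$ from Proposition \ref{p2.3} and apply the Leibniz rule for the Lie bracket together with the relation $\left[\frac{\overline{\partial}}{\overline{\partial}y^a},\Gamma\right]=\frac{\overline{\partial}}{\overline{\partial}y^a}$ of Proposition \ref{p2.2}; this gives $[\Gamma,Z]=\left(\Gamma Z^a-Z^a\right)\frac{\overline{\partial}}{\overline{\partial}y^a}$, which is again a section of $\mathcal{L}_{\Gamma}^{'}$.

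For the direct implication, assume $\nabla^1$ is basic. Since $Z\in\Gamma(\mathcal{L}_{\Gamma}^{'})\subset\Gamma(V(TM^0))$ is itself a lift of $Z$ (that is, $\pi_0(Z)=Z$), I would specialize \eqref{III5} to $X=\Gamma$ and $\widetilde{Z}=Z$ to obtain $\nabla^1_{\Gamma}Z=\pi_0[\Gamma,Z]$, and then remove $\pi_0$ by the bracket identity, yielding $\nabla^1_{\Gamma}Z=[\Gamma,Z]$.

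For the converse, assume $\nabla^1_{\Gamma}Z=[\Gamma,Z]$ for all $Z\in\Gamma(\mathcal{L}_{\Gamma}^{'})$, and check that \eqref{III5} holds for an arbitrary $X\in\Gamma(\mathcal{L}_{\Gamma})$ and an arbitrary lift $\widetilde{Z}$. Writing $X=f\Gamma$ and $\widetilde{Z}=Z+g\Gamma$ for suitable functions $f,g$, the Leibniz rule gives $[f\Gamma,Z+g\Gamma]=f[\Gamma,Z]-(Zf)\Gamma+\left(f\,\Gamma g-g\,\Gamma f\right)\Gamma$; the last two terms are multiples of $\Gamma$ and hence lie in $\ker\pi_0$, while $\pi_0\bigl(f[\Gamma,Z]\bigr)=f[\Gamma,Z]$ by the bracket identity, so $\pi_0[X,\widetilde{Z}]=f[\Gamma,Z]$ independently of the chosen lift. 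On the other hand the $C^{\infty}(TM^0)$-linearity of the connection in its first slot gives $\nabla^1_XZ=\nabla^1_{f\Gamma}Z=f\nabla^1_{\Gamma}Z=f[\Gamma,Z]$. The two expressions agree, so $\nabla^1$ is basic.

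I expect the bracket identity $[\Gamma,Z]\in\Gamma(\mathcal{L}_{\Gamma}^{'})$ to be the only substantive step: everything else is the formal manipulation of lifts and the tensoriality of $\nabla^1$ in its first argument. The other point requiring care is the independence of $\pi_0[X,\widetilde{Z}]$ from the lift $\widetilde{Z}$, which is exactly what makes the basic condition consistent; this is guaranteed here because the $\Gamma$-component $g\Gamma$ of the lift contributes only terms proportional to $\Gamma$, all annihilated by $\pi_0$.
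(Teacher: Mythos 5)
Your proof is correct and follows essentially the same route as the paper: the substantive step in both is that $[\Gamma,Z]=\left(\Gamma Z^a-Z^a\right)\frac{\overline{\partial}}{\overline{\partial}y^a}$ is again a section of $\mathcal{L}_{\Gamma}^{'}$, so $\pi_0[X,\widetilde{Z}]=f[\Gamma,Z]$ independently of the chosen lift, and the rest is tensoriality in the first slot. The only cosmetic difference is in the forward direction, where you specialize the basic condition directly to $X=\Gamma$, $\widetilde{Z}=Z$, while the paper first records the frame condition \eqref{III6} and then recovers $\nabla^1_{\Gamma}Z=[\Gamma,Z]$ by the Leibniz rule.
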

\begin{proof}
Let $\nabla^1:V(TM^0)\times \mathcal{L}_{\Gamma}^{'}\rightarrow \mathcal{L}_{\Gamma}^{'}$ be a connection on $\mathcal{L}_{\Gamma}^{'}$ such that $\nabla^1_{\Gamma}Z=[\Gamma,Z]$. Let $X\in \Gamma(\mathcal{L}_{\Gamma})$ be a section in the structural bundle of a the line foliation $\mathcal{F}_{\Gamma}$, so its form is $X=a\Gamma$, with $a$ a differentiable function on $TM^0$. An arbitrary vertical vector field $\widetilde{Z}$ which projects into $Z\in \mathcal{L}_{\Gamma}^{'}$ is in the form
\begin{displaymath}
\widetilde{Z}=Z+b\Gamma
\end{displaymath}
with $b$ a differentiable function on $TM^0$. 

We have
\begin{eqnarray*}
[X,\widetilde{Z}]&=&[a\Gamma,Z+b\Gamma]\\
&=&a[\Gamma,Z]+(a\Gamma(b)-b\Gamma(a)-Z(a))\Gamma.
\end{eqnarray*}
According to the second relation from \eqref{II5} for any $Z=Z^i\frac{\overline{\partial}}{\overline{\partial}y^i}\in\Gamma\left(\mathcal{L}_{\Gamma}^{'}\right)$, we have
\begin{displaymath}
[\Gamma,Z]=(\Gamma(Z^i)-Z^i)\frac{\overline{\partial}}{\overline{\partial} y^i}\in\Gamma\left( \mathcal{L}_{\Gamma}^{'}\right),
\end{displaymath}
so $\pi_0[X,\widetilde{Z}]=a[\Gamma,Z]$. We also have $\nabla^1_XZ=a\nabla^1_{\Gamma}Z=a[\Gamma,Z]=\pi_0[X,\widetilde{Z}]$, hence $\nabla^1$ is a basic connection on $\mathcal{L}_{\Gamma}^{'}$.

Conversely, by the second relation from \eqref{II5}, in the adapted basis $\left\{\frac{\overline{\partial}}{\overline{\partial} y^i},\Gamma\right\}$ in $V(TM^0)$, every basic connection $\nabla^1$ on $\mathcal{L}_{\Gamma}^{'}$ is locally satisfying
\begin{equation}
\nabla^1_{\Gamma}\frac{\overline{\partial}}{\overline{\partial} y^i}=-\frac{\overline{\partial}}{\overline{\partial} y^i}
\label{III6}
\end{equation}
for any $i=1,\ldots,n$.
  
Now, if \eqref{III6} is satsfied, then
\begin{displaymath}
\nabla^1_{\Gamma}Z=\Gamma(Z^i)\frac{\overline{\partial}}{\overline{\partial} y^i}+Z^i\nabla^1_{\Gamma}\frac{\overline{\partial}}{\overline{\partial} y^i}=\Gamma(Z^i)\frac{\overline{\partial}}{\overline{\partial} y^i}-Z^i\frac{\overline{\partial}}{\overline{\partial} y^i}.
\end{displaymath}
Hence the condition \eqref{III6} is equivalent with $\nabla^1_{\Gamma}Z=[\Gamma,Z],\quad \forall Z\in \Gamma\left(\mathcal{L}_{\Gamma}^{'}\right)$.
\end{proof}
Moreover, by relation \eqref{II4}, it follows that condition \eqref{III6} has geometrical meaning. We obtain the locally characterisation:
\begin{proposition}
\label{p3.2}
A connection $\nabla^1$ on $\mathcal{L}_{\Gamma}^{'}$ is basic if and only if in an adapted local chart the relation \eqref{III6} holds. 
\end{proposition}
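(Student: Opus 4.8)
The plan is to reduce Proposition \ref{p3.2} to the equivalence already established inside the proof of Proposition \ref{p3.1}, and then to show that the local relation \eqref{III6} is independent of the adapted chart, so that verifying it in a single chart is meaningful. Recall that the proof of Proposition \ref{p3.1}, via the second bracket identity in \eqref{II5}, shows that $\nabla^1$ is basic on $\mathcal{L}_{\Gamma}^{'}$ exactly when $\nabla^1_{\Gamma}\frac{\overline{\partial}}{\overline{\partial} y^i}=-\frac{\overline{\partial}}{\overline{\partial} y^i}$ in the adapted basis $\left\{\frac{\overline{\partial}}{\overline{\partial} y^i},\Gamma\right\}$. Thus it suffices to check that \eqref{III6}, written in a chart $U$, forces the analogous relation in any overlapping chart $\widetilde{U}$.

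To do this I would differentiate the transition rule \eqref{II4} along $\Gamma$. Applying $\nabla^1_{\Gamma}$ to $\frac{\overline{\partial}}{\overline{\partial}\widetilde{y}^{i_1}}=\frac{\partial x^k}{\partial\widetilde{x}^{i_1}}\frac{\overline{\partial}}{\overline{\partial} y^k}$ and using the Leibniz rule in the section slot gives
\begin{displaymath}
\nabla^1_{\Gamma}\frac{\overline{\partial}}{\overline{\partial}\widetilde{y}^{i_1}}=\Gamma\!\left(\frac{\partial x^k}{\partial\widetilde{x}^{i_1}}\right)\frac{\overline{\partial}}{\overline{\partial} y^k}+\frac{\partial x^k}{\partial\widetilde{x}^{i_1}}\,\nabla^1_{\Gamma}\frac{\overline{\partial}}{\overline{\partial} y^k}.
\end{displaymath}
The key observation---and the source of the ``geometrical meaning'' alluded to before the statement---is that the Jacobian entries $\frac{\partial x^k}{\partial\widetilde{x}^{i_1}}$ depend on the base coordinates $x$ alone, while $\Gamma=y^j\frac{\partial}{\partial y^j}$ is purely vertical; hence $\Gamma\!\left(\frac{\partial x^k}{\partial\widetilde{x}^{i_1}}\right)=0$ and the first term vanishes. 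Substituting \eqref{III6} into the surviving term and applying \eqref{II4} once more yields $\nabla^1_{\Gamma}\frac{\overline{\partial}}{\overline{\partial}\widetilde{y}^{i_1}}=-\frac{\overline{\partial}}{\overline{\partial}\widetilde{y}^{i_1}}$, which is \eqref{III6} in $\widetilde{U}$. By symmetry the conditions in $U$ and $\widetilde{U}$ are equivalent on the overlap, so \eqref{III6} is chart-independent, and together with Proposition \ref{p3.1} this proves the claim.

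The computation is short, and I expect the only point needing care to be the use of the overcomplete system $\left\{\frac{\overline{\partial}}{\overline{\partial} y^i}\right\}_{i=1,\ldots,n}$, whose elements are not independent but obey \eqref{II6}. One should check that requiring \eqref{III6} for all $i=1,\ldots,n$ is compatible with \eqref{II6}; this is routine, since applying $\nabla^1_{\Gamma}$ to \eqref{II6} and noting that $\Gamma\!\left(\frac{y^a}{y^n}\right)=0$ (the ratio being positively homogeneous of degree zero, cf. \eqref{10}) shows that \eqref{III6} for $i=1,\ldots,n-1$ automatically propagates to $i=n$. Hence no inconsistency arises and the characterization holds as stated.
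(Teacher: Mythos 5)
Your argument is correct and follows the paper's own route: the paper derives the equivalence of ``basic'' with \eqref{III6} inside the proof of Proposition \ref{p3.1} and then simply asserts that, by \eqref{II4}, condition \eqref{III6} ``has geometrical meaning,'' which is exactly the chart-independence you verify via the Leibniz rule and the vanishing of $\Gamma$ on the base Jacobian entries. Your extra consistency check against the dependence relation \eqref{II6}, using that $y^a/y^n$ is $0$-homogeneous, is a detail the paper leaves implicit but is handled correctly.
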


Now, by \eqref{III2}, a connection $\nabla^2$ on $H(TM^0)$ is  basic with respect to the vertical foliation $\mathcal{F}_V$ if 
\begin{equation}
\nabla^2_XY=\pi_1[X,\widetilde{Y}]
\label{III7}
\end{equation}
for any $X\in \Gamma(V(TM^0))$ and $\widetilde{Y}\in\Gamma(T(TM^0))$ such that $\pi_1(\widetilde{Y})=Y$
\begin{proposition}
\label{p3.3}
A connection $\nabla^2$ on $H(TM^0)$ is basic if and only if in an adapted local frame $\left\{\frac{\delta}{\delta x^i},\frac{\partial}{\partial y^i}\right\}$ on $T(TM^0)$ we have 
\begin{displaymath}
\nabla^2_{\frac{\partial}{\partial y^i}}\frac{\delta}{\delta x^i}=0
\end{displaymath}
for any $i=1,\ldots,n$.
\end{proposition}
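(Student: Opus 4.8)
The plan is to mirror the argument of Proposition \ref{p3.1}, reducing the global basic condition \eqref{III7} to an identity on the adapted frame $\left\{\frac{\delta}{\delta x^i},\frac{\partial}{\partial y^i}\right\}$ and then exploiting the integrability of the vertical distribution. First I would record that the right-hand side of \eqref{III7} does not depend on the chosen lift: if $\pi_1(\widetilde{Y})=\pi_1(\widetilde{Y}')=Y$ then $\widetilde{Y}-\widetilde{Y}'\in\Gamma(V(TM^0))$, and since $V(TM^0)$ is integrable we get $\left[X,\widetilde{Y}-\widetilde{Y}'\right]\in\Gamma(V(TM^0))$ for every $X\in\Gamma(V(TM^0))$, so $\pi_1\!\left[X,\widetilde{Y}-\widetilde{Y}'\right]=0$. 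Hence it suffices to test \eqref{III7} on the frame $\left\{\frac{\partial}{\partial y^i}\right\}$ of the structural bundle and on the sections $\frac{\delta}{\delta x^j}$, whose $\pi_1$-classes span $H(TM^0)$, using $\frac{\delta}{\delta x^j}$ itself as canonical lift.

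For the direct implication I would assume $\nabla^2$ basic and specialise \eqref{III7} to $X=\frac{\partial}{\partial y^i}$, $Y=\frac{\delta}{\delta x^j}$, obtaining $\nabla^2_{\frac{\partial}{\partial y^i}}\frac{\delta}{\delta x^j}=\pi_1\!\left[\frac{\partial}{\partial y^i},\frac{\delta}{\delta x^j}\right]$. The crux is the single bracket computation: substituting \eqref{3} and using that coordinate fields commute,
\[
\left[\frac{\partial}{\partial y^i},\frac{\delta}{\delta x^j}\right]=\left[\frac{\partial}{\partial y^i},\frac{\partial}{\partial x^j}-G^k_j\frac{\partial}{\partial y^k}\right]=-\frac{\partial G^k_j}{\partial y^i}\frac{\partial}{\partial y^k}.
\]
This is a vertical field, so $\pi_1$ annihilates it and $\nabla^2_{\frac{\partial}{\partial y^i}}\frac{\delta}{\delta x^j}=0$ for all $i,j$, in particular the stated relation with $i=j$.

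For the converse I would assume $\nabla^2_{\frac{\partial}{\partial y^i}}\frac{\delta}{\delta x^j}=0$ (for all $i,j$) and verify \eqref{III7} for arbitrary $X=X^i\frac{\partial}{\partial y^i}\in\Gamma(V(TM^0))$ and $Y=Y^j\frac{\delta}{\delta x^j}\in\Gamma(H(TM^0))$ with canonical lift $\widetilde{Y}=Y^j\frac{\delta}{\delta x^j}$. By the connection axioms and the hypothesis, $\nabla^2_X Y=X^i\frac{\partial Y^j}{\partial y^i}\frac{\delta}{\delta x^j}$. Expanding the bracket by the Leibniz rule gives
\[
[X,\widetilde{Y}]=X^i\frac{\partial Y^j}{\partial y^i}\frac{\delta}{\delta x^j}+X^iY^j\left[\frac{\partial}{\partial y^i},\frac{\delta}{\delta x^j}\right]-Y^j\left(\frac{\delta}{\delta x^j}X^i\right)\frac{\partial}{\partial y^i},
\]
where the second term is vertical by the computation above and the third term is manifestly vertical; applying $\pi_1$ leaves only $X^i\frac{\partial Y^j}{\partial y^i}\frac{\delta}{\delta x^j}=\nabla^2_X Y$, which is exactly \eqref{III7}.

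I do not expect a genuine obstacle here: the argument is computational and parallels Proposition \ref{p3.1}. The only two points that need care are the well-definedness of the right-hand side of \eqref{III7}, which I would settle once and for all via the integrability of $V(TM^0)$, and the verticality of $\left[\frac{\partial}{\partial y^i},\frac{\delta}{\delta x^j}\right]$, which is the structural fact that makes the horizontal projection collapse to the single surviving term in both directions of the equivalence.
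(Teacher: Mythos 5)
Your proof is correct and follows essentially the same route as the paper: both directions rest on the verticality of $\left[\frac{\partial}{\partial y^i},\frac{\delta}{\delta x^j}\right]=-\frac{\partial G^k_j}{\partial y^i}\frac{\partial}{\partial y^k}$ and a Leibniz expansion of the bracket, the only difference being that you dispose of lift-independence once at the outset via integrability of $V(TM^0)$, whereas the paper carries the general lift $\widetilde{Y}=Y+Y^i_v\frac{\partial}{\partial y^i}$ through the computation. Your reading of the stated condition as holding for all pairs of indices $i,j$ matches what the paper's own proof actually uses.
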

\begin{proof} Obviously, the above condition has geometrical meaning since if $(\widetilde{U},(\widetilde{x}^{i_1},\widetilde{y}^{i_1}))$ is another local chart on $TM^0$, in $U \cap \widetilde{U} \neq \emptyset $, we have
\begin{displaymath}
\frac{\delta}{\delta \widetilde{x}^{i_1}}=\frac{\partial x^i}{\partial \widetilde{x}^{i_1}}\frac{\delta}{\delta x^i},\quad \frac{\partial}{\partial \widetilde{y}^{j_1}}=\frac{\partial x^j}{\partial \widetilde{x}^{j_1}}\frac{\partial}{\partial y^j}.
\end{displaymath}
If $\nabla^2$ is a basic connection with respect to the vertical foliation, then by definition it results 
\begin{displaymath}
\nabla^2_{\frac{\partial}{\partial y^j}}\frac{\delta}{\delta x^i}=\pi_1\left[\frac{\delta}{\delta x^i},\frac{\partial}{\partial y^j}\right]=\pi_1\left(\frac{\partial G^k_i}{\partial y^j}\frac{\partial}{\partial y^k}\right)=0.
\end{displaymath}
Conversely, let $\nabla^2:T(TM^0)\times H(TM^0)\rightarrow H(TM^0)$ be a connection on $H(TM^0)$ which locally satisfies 
\begin{displaymath}
\nabla^2_{\frac{\partial}{\partial y^i}}\frac{\delta}{\delta x^i}=0
\end{displaymath}
for any $i=1,\ldots,n$.

An arbitrary vertical vector field $X$ has local expression $X=X^i\frac{\partial}{\partial y^i}$ and a vector field $\widetilde{Y}$ whose horizontal projection is $Y=Y_h^i\frac{\delta}{\delta x^i}$ is by the form $\widetilde{Y}=Y+Y^i_v\frac{\partial}{\partial y^i}$. 

We calculate
\begin{displaymath}
\nabla^2_XY=X^i\nabla^2_{\frac{\partial}{\partial y^i}}\left(Y_h^j\frac{\delta}{\delta x^j}\right)=X^i\frac{\partial Y_h^j}{\partial y^i}\frac{\delta}{\delta x^j},
\end{displaymath}
\begin{displaymath}
\left[X,\widetilde{Y}\right]=X^i\frac{\partial Y_h^j}{\partial y^i}\frac{\delta}{\delta x^j}+\left(X^i\frac{\partial Y_v^j}{\partial y^i}-Y_h^i\frac{\delta X^j}{\delta x^i}\right)\frac{\partial}{\partial y^j}+X^iY^j_h\left[\frac{\partial}{\partial y^i},\frac{\delta}{\delta x^j}\right],
\end{displaymath}
hence the relation \eqref{III7} is verified, since $\left[\frac{\partial}{\partial y^i},\frac{\delta}{\delta x^j}\right]\in \Gamma(V(TM^0))$. So, $\nabla^2$ is a basic connection with respect to the vertical foliation $\mathcal{F}_V$.
\end{proof}

Also, by \eqref{III2}, a connection $\nabla$ on $\mathcal{L}_{\Gamma}^{\perp}$ is basic with respect to the line foliation $\mathcal{F}_{\Gamma}$ if 
\begin{equation}
\nabla_XY=\pi_2[X,\widetilde{Y}]
\label{III8}
\end{equation}
for any $X\in \Gamma(\mathcal{L}_{\Gamma})$ and $\widetilde{Y}\in \Gamma(T(TM^0))$ such that $\pi_2(\widetilde{Y})=Y$.

We have the following locally characterisation of a basic connection on $\mathcal{L}_{\Gamma}^{\perp}$:

\begin{proposition}
\label{p3.4}
A connection $\nabla$ on $\mathcal{L}_{\Gamma}^{\perp}$ is basic with respect to the line foliation $\mathcal{F}_{\Gamma}$ if and only if in an adapted local frame $\left\{\frac{\delta}{\delta x^i},\frac{\overline{\partial}}{\overline{\partial} y^i}\right\}$ on $\mathcal{L}_{\Gamma}^{\perp}$ we have 
\begin{equation}
\nabla_{\Gamma}\frac{\delta}{\delta x^i}=0,\quad \nabla_{\Gamma}\frac{\overline{\partial}}{\overline{\partial} y^i}=-\frac{\overline{\partial}}{\overline{\partial} y^i}
\label{III9}
\end{equation}
for any $i=1,\ldots,n$.
\end{proposition}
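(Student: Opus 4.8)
The plan is to follow the same two-step pattern as in Propositions \ref{p3.1} and \ref{p3.3}, exploiting the fact that each vector of the adapted frame $\left\{\frac{\delta}{\delta x^i},\frac{\overline{\partial}}{\overline{\partial} y^i}\right\}$ already lies in $\mathcal{L}_{\Gamma}^{\perp}$ and is therefore its own $\pi_2$-lift. This makes the defining relation \eqref{III8} collapse to a computation of Lie brackets against $\Gamma$, followed by the projection $\pi_2$.

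For the direct implication I would specialise \eqref{III8} to $X=\Gamma\in\Gamma(\mathcal{L}_{\Gamma})$ and to $\widetilde{Y}=\frac{\delta}{\delta x^i}$, respectively $\widetilde{Y}=\frac{\overline{\partial}}{\overline{\partial} y^i}$, both of which project onto themselves. The claim then reduces to evaluating $\left[\Gamma,\frac{\delta}{\delta x^i}\right]$ and $\left[\Gamma,\frac{\overline{\partial}}{\overline{\partial} y^i}\right]$. The second bracket equals $-\frac{\overline{\partial}}{\overline{\partial} y^i}$ directly from the second relation in \eqref{II5}, giving at once $\nabla_{\Gamma}\frac{\overline{\partial}}{\overline{\partial} y^i}=-\frac{\overline{\partial}}{\overline{\partial} y^i}$. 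For the first bracket I would expand $\frac{\delta}{\delta x^i}=\frac{\partial}{\partial x^i}-G^k_i\frac{\partial}{\partial y^k}$ and use $\left[\Gamma,\frac{\partial}{\partial x^i}\right]=0$, $\left[\Gamma,\frac{\partial}{\partial y^k}\right]=-\frac{\partial}{\partial y^k}$ together with $\Gamma(G^k_i)=y^j\frac{\partial G^k_i}{\partial y^j}=G^k_i$. This last identity is Euler's theorem applied to the degree-one homogeneity of $G^k_i$ in the fibre variables, and it produces exactly the cancellation $(\Gamma(G^k_i)-G^k_i)\frac{\partial}{\partial y^k}=0$ that forces $\left[\Gamma,\frac{\delta}{\delta x^i}\right]=0$, whence $\nabla_{\Gamma}\frac{\delta}{\delta x^i}=\pi_2\!\left[\Gamma,\frac{\delta}{\delta x^i}\right]=0$.

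For the converse I would take an arbitrary $X=a\Gamma\in\Gamma(\mathcal{L}_{\Gamma})$ and an arbitrary lift $\widetilde{Y}=Y+c\Gamma$ of $Y\in\Gamma(\mathcal{L}_{\Gamma}^{\perp})$, with $a,c$ functions on $TM^0$, using the splitting $T(TM^0)=\mathcal{L}_{\Gamma}^{\perp}\oplus\mathcal{L}_{\Gamma}$. Expanding $[a\Gamma,Y+c\Gamma]$ by the Leibniz rule for brackets and applying $\pi_2$ annihilates every term proportional to $\Gamma$, leaving $\pi_2[X,\widetilde{Y}]=a\,\pi_2[\Gamma,Y]$. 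Writing $Y=Y_h^i\frac{\delta}{\delta x^i}+Y_v^a\frac{\overline{\partial}}{\overline{\partial} y^a}$ in the genuine basis of $\mathcal{L}_{\Gamma}^{\perp}$ and using the two brackets above, one checks that $[\Gamma,Y]=\Gamma(Y_h^i)\frac{\delta}{\delta x^i}+(\Gamma(Y_v^a)-Y_v^a)\frac{\overline{\partial}}{\overline{\partial} y^a}$ already lies in $\mathcal{L}_{\Gamma}^{\perp}$, so that $\pi_2[\Gamma,Y]=[\Gamma,Y]$. On the other hand, computing $\nabla_XY=a\nabla_{\Gamma}Y$ with the assumed relations \eqref{III9} yields the identical expression, which matches $a\,\pi_2[\Gamma,Y]$ and establishes \eqref{III8}.

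Finally, as in the preceding propositions, I would remark that the stated condition is chart-independent: $\Gamma$ is globally defined, while $\frac{\delta}{\delta x^i}$ and $\frac{\overline{\partial}}{\overline{\partial} y^i}$ transform tensorially under the rule for $\frac{\delta}{\delta x^i}$ and under \eqref{II4} respectively, so relations \eqref{III9} persist across overlapping adapted charts. The only genuinely delicate point is the vanishing of $\left[\Gamma,\frac{\delta}{\delta x^i}\right]$, which rests on the homogeneity of $G^k_i$; all the remaining steps are the bracket bookkeeping already carried out in Propositions \ref{p3.1}--\ref{p3.3}.
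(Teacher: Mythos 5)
Your proposal is correct and follows essentially the same route as the paper: both directions rest on the brackets $\left[\Gamma,\frac{\delta}{\delta x^i}\right]=0$ (via Euler homogeneity of $G^k_i$) and $\left[\Gamma,\frac{\overline{\partial}}{\overline{\partial} y^i}\right]=-\frac{\overline{\partial}}{\overline{\partial} y^i}$ from \eqref{II5}, and the converse is the same expansion of $[a\Gamma,Y+f\Gamma]$ followed by $\pi_2$. The only cosmetic difference is that you take the frame vectors as their own lifts in the direct implication, whereas the paper carries the general lift $+f\Gamma$ whose extra term is killed by $\pi_2$ anyway.
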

\begin{proof} Let $\nabla$ be a basic connection on $\mathcal{L}_{\Gamma}^{\perp}$. Since $\mathcal{L}_{\Gamma}^{\perp}$ is locally generated by $\left\{\frac{\delta}{\delta x^i},\frac{\overline{\partial}}{\overline{\partial} y^i}\right\}$, the condition \eqref{III8} give us the following relations:
\begin{displaymath}
\nabla_{\Gamma}\frac{\delta}{\delta x^i}=\pi_2\left[\Gamma,\frac{\delta}{\delta x^i}+f\Gamma\right]=\pi_2\left((G^j_i-\Gamma(G^j_i))\frac{\partial}{\partial y^j}+\Gamma(f)\Gamma\right)=0,
\end{displaymath}
since by homogeneity condition of Finsler structure we have \cite{Bej}, \cite{B-M}, \cite{M-A}: \begin{displaymath}
\Gamma(G^j_i)=y^k\frac{\partial G^j_i}{\partial y^k}=y^k\frac{\partial^2 G^j}{\partial y^k\partial y^i}=G^j_i,
\end{displaymath}
and
\begin{displaymath}
\nabla_{\Gamma}\frac{\overline{\partial}}{\overline{\partial} y^i}=\pi_2\left[\Gamma,\frac{\overline{\partial}}{\overline{\partial} y^i}+f\Gamma\right]=\pi_2\left(-\frac{\overline{\partial}}{\overline{\partial} y^i}+\Gamma(f)\Gamma\right)=-\frac{\overline{\partial}}{\overline{\partial} y^i}.
\end{displaymath}
Conversely, let us consider $\nabla:T(TM^0)\times \mathcal{L}_{\Gamma}^{\perp}\rightarrow \mathcal{L}_{\Gamma}^{\perp}$ be a connection on $\mathcal{L}_{\Gamma}^{\perp}$ which locally satisfies \eqref{III9}.

An arbitrary vector field $Y\in \Gamma\left(\mathcal{L}_{\Gamma}^{\perp}\right)$ is locally given by $Y=Y_h^i\frac{\delta}{\delta x^i}+Y^i\frac{\overline{\partial}}{\overline{\partial} y^i}$ and a vector field $\widetilde{Y}\in \Gamma(T(TM^0))$ which projects by $\pi_2$ in $Y$ is $\widetilde{Y}=f\Gamma+Y$. For an arbitrary vector field $X=a\Gamma\in \Gamma(\mathcal{L}_{\Gamma})$ we calculate
\begin{eqnarray*}
\nabla_{X}Y&=&\nabla_{a\Gamma}(Y_h^i\frac{\delta}{\delta x^i}+Y^i\frac{\overline{\partial}}{\overline{\partial}y^i})\\
&=&a\Gamma(Y_h^i)\frac{\delta}{\delta x^i}+aY_h^i\nabla_{\Gamma}\frac{\delta}{\delta x^i}+a\Gamma(Y^i)\frac{\overline{\partial}}{\overline{\partial} y^i}+aY^i\nabla_{\Gamma}\frac{\overline{\partial}}{\overline{\partial} y^i}\\
&=&a\Gamma(Y_h^i)\frac{\delta}{\delta x^i}+a(\Gamma(Y^i)-Y^i)\frac{\overline{\partial}}{\overline{\partial} y^i},
\end{eqnarray*}
and
\begin{eqnarray*}
\pi_2[X,\widetilde{Y}]&=&\pi_2\left(a\Gamma(f)\Gamma-f\Gamma(a)\Gamma+a\Gamma(Y_h^i)\frac{\delta}{\delta x^i}+aY^i_h\left[\Gamma,\frac{\delta}{\delta x^i}\right]\right)\\
&&+\pi_2\left(a(\Gamma(Y^i)-Y^i)\frac{\overline{\partial}}{\overline{\partial} y^i}-Y^i_h\frac{\delta a}{\delta x^i}\Gamma-Y^i\frac{\overline{\partial}a}{\overline{\partial} y^i}\Gamma\right)\\
&=&a\Gamma(Y_h^i)\frac{\delta}{\delta x^i}+a(\Gamma(Y^i)-Y^i)\frac{\overline{\partial}}{\overline{\partial} y^i},
\end{eqnarray*}
since $\left[\Gamma,\frac{\delta}{\delta x^i}\right]=0$. Thus, we have obtained that the connection $\nabla$ is a basic one.
\end{proof}

\subsection{A triple of adapted basic connections to subfoliation $(\mathcal{F}_V,\mathcal{F}_{\Gamma})$}

In \cite{B-F, Bej} there are studied several connections on $T(TM^0)$ and their relations with the vertical and fundamental Liouville distributions. The \textit{Vr\^{a}nceanu connection} $\nabla^*$ is locally given with respect to basis $\left\{\frac{\delta}{\delta x^i},\frac{\partial}{\partial y^i}\right\}$ adapted to vertical foliation, by \cite{B-F}:
\begin{displaymath}
\nabla^*_{\frac{\partial}{\partial y^j}}\frac{\partial}{\partial y^i}=C_{ij}^k\frac{\partial}{\partial y^k},\quad \nabla^*_{\frac{\delta}{\delta x^j}}\frac{\partial}{\partial y^i}=G_{ij}^k\frac{\partial}{\partial y^k},
\end{displaymath}
\begin{displaymath}
\nabla^*_{\frac{\partial}{\partial y^j}}\frac{\delta}{\delta x^i}=0,\quad \nabla^*_{\frac{\delta}{\delta x^j}}\frac{\delta}{\delta x^i}=F_{ij}^k
\frac{\delta}{\delta x^k},
\end{displaymath}
where
\begin{displaymath}
C_{ij}^k=\frac{1}{2}g^{kl}\frac{\partial g_{ij}}{\partial y^l},\quad G_{ij}^k=\frac{\partial G^k_j}{\partial y^i},
\end{displaymath}
\begin{displaymath}
F_{ij}^k=\frac{1}{2}g^{kl}\left(\frac{\delta g_{il}}{\delta x^j}+\frac{\delta g_{jl}}{\delta x^i}-\frac{\delta g_{ij}}{\delta x^l}\right).
\end{displaymath}
The restriction of Vr\^{a}nceanu connection to $T(TM^0)\times H(TM^0)$ is a connection on $H(TM^0)$ denoted by $\nabla^*_1$, which satisfies the conditions from Proposition \ref{p3.3}, so it is a basic connection on $H(TM^0)$ with respect to vertical foliation $\mathcal{F}_V$.

On the other hand in \cite{Ma} there is introduced the \textit{Vaisman connection}, also called \textit{second connection} \cite{Va, Va1}, $\nabla ^v$ on $V(TM^0)$ as it follows.  Let $v:V(TM^0)\rightarrow \mathcal{L}_{\Gamma}^{'}$, $h:V(TM^0)\rightarrow \mathcal{L}_{\Gamma}$ be the projection morphisms with respect to the decomposition $V(TM^0)=\mathcal{L}_{\Gamma}^{'}\oplus\mathcal{L}_{\Gamma}$. We search for a connection on $V(TM^0)$ with the following properties:
\begin{enumerate}
\item[a)] If $Y\in \Gamma(\mathcal{L}_{\Gamma}^{'})$ (respectively $\in \Gamma(\mathcal{L}_{\Gamma})$), then $\nabla ^v _XY\in \Gamma(\mathcal{L}_{\Gamma}^{'})$ (respectively $\in \Gamma(\mathcal{L}_{\Gamma})$), for every vertical vector field $X$.

\item[b)]  $v(T^v(X,Y))=0$, (respectively $h(T^v(X,Y))=0$) if at least one of the arguments is in $\mathcal{L}_{\Gamma}^{'}$, respectively in $\mathcal{L}_{\Gamma}$, where $T^v$ is the torsion of the connection $\nabla^v$.

\item[c)] For every $X,Y,Y'\in \Gamma(\mathcal{L}_{\Gamma}^{'})$ (respectively $\in \Gamma(\mathcal{L}_{\Gamma})$), 
\begin{displaymath}
(\nabla^ v _XG)(Y,Y')=0.
\end{displaymath}

\item[d)] Moreover, we need to give $\nabla ^v_XY$ for every $X\in H(TM^0)$ and we put the above a), b) conditions also for $X$ an horizontal vector field.
\end{enumerate}

It is proved that $\nabla ^v$ locally verifies:
\begin{displaymath}
\nabla^v_{\Gamma}\frac{\overline{\partial}}{\overline{\partial} y^i}=-\frac{\overline{\partial}}{\overline{\partial} y^i},
\end{displaymath}
which means that $\nabla^v$ is a basic connection on $\mathcal{L}_{\Gamma}^{'}$. 

Indeed, if we put 
\begin{displaymath}
\nabla^v_{\Gamma}\frac{\overline{\partial}}{\overline{\partial}y^i}=s_i^j\frac{\overline{\partial}}{\overline{\partial}y^i}\,,\,\nabla^v_{\frac{\overline{\partial}}{\overline{\partial}y^i}}\Gamma=s_i\Gamma\,,\,\nabla^v_{\Gamma}\Gamma=s\Gamma\,,\,\nabla^v_{\frac{\overline{\partial}}{\overline{\partial}y^i}}\frac{\overline{\partial}}{\overline{\partial}y^j}=s_{ij}^k\frac{\overline{\partial}}{\overline{\partial}y^k},
\end{displaymath}
\begin{displaymath}
\nabla^v_{\frac{\delta}{\delta x^i}}\frac{\overline{\partial}}{\overline{\partial}y^j}=\beta^k_{ij}\frac{\overline{\partial}}{\overline{\partial}y^k}\,,\,\nabla^v_{\frac{\delta}{\delta x^i}}\Gamma=\beta_{i}\Gamma,
\end{displaymath}
then from $T^v\left(\frac{\overline{\partial}}{\overline{\partial}y^i},\Gamma\right)=0$ we get 
$s_i=0\,,\,s_i^j=0\,\,\forall\,i\neq j\,,\,s_i^i=-1$. For details concerning the other coefficients  see \cite{Ma}.

Hence we have the basic connection $\nabla^*_1$ on $H(TM^0)$ and the basic connection $\nabla^v$ on $\mathcal{L}_{\Gamma}^{'}$. Following \cite{C-M}, we can build now a connection on $\mathcal{L}_{\Gamma}^{\perp}$ as follows:
\begin{displaymath}
\overline{\nabla}:T(TM^0)\times \mathcal{L}_{\Gamma}^{\perp}\rightarrow \mathcal{L}_{\Gamma}^{\perp}\,,\,\overline{\nabla}_XZ=\nabla^*_{1X}Z^h+\nabla^v_XZ'
\end{displaymath}
for any $Z=Z^h+Z'\in\Gamma\left( \mathcal{L}_{\Gamma}^{\perp}\right)=\Gamma\left(H(TM^0)\right)\oplus\Gamma\left(\mathcal{L}_{\Gamma}^{'}\right)$ and $X\in \Gamma(T(TM^0))$.

By direct calculus we have
\begin{displaymath}
\overline{\nabla}_{\Gamma}\frac{\delta}{\delta x^i}=\nabla^*_{1\Gamma}\frac{\delta}{\delta x^i}=0, \quad \overline{\nabla}_{\Gamma}\frac{\overline{\partial}}{\overline{\partial} y^i}=\nabla^v_{\Gamma}\frac{\overline{\partial}}{\overline{\partial} y^i}=-\frac{\overline{\partial}}{\overline{\partial} y^i},
\end{displaymath}
so the connection $\overline{\nabla}$ satisfies conditions from Proposition \ref{p3.4}, hence it is a basic connection on $\mathcal{L}_{\Gamma}^{\perp}$. 

Now, for the $(n,2n-1)$-codimensional subfoliation $(\mathcal{F}_V,\mathcal{F}_{\Gamma})$ we can consider the exact sequence \eqref{III1}, namely
\begin{displaymath}
0\longrightarrow\mathcal{L}_{\Gamma}^{'}\stackrel{i}{\longrightarrow}\mathcal{L}_{\Gamma}^{\perp}\stackrel{\pi}{\longrightarrow}H(TM^0)\longrightarrow0
\end{displaymath}
and the general theory of $(q_1,q_2)$-codimensional foliations, see \cite{C-M}, leads to the following results:
\begin{proposition}
\label{p3.5}
For the triple basic connections $(\nabla^v,\nabla^*_1,\overline{\nabla})$ on $\mathcal{L}_{\Gamma}^{'}$, $H(TM^0)$ and $\mathcal{L}_{\Gamma}^{\perp}$, respectively, we have
\begin{displaymath}
i\left(\nabla^v_XY\right)=\overline{\nabla}_Xi(Y)\,,\,\pi\left(\overline{\nabla}_XZ\right)=\nabla^*_{1X}\pi(Z)\,,\,\,\forall\,Y\in\Gamma\left(\mathcal{L}_{\Gamma}^{'}\right),\,Z\in\Gamma\left(\mathcal{L}_{\Gamma}^{\perp}\right).
\end{displaymath}
\end{proposition}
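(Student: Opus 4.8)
The plan is to verify both identities directly from the definition of $\overline{\nabla}$, exploiting the fact that the orthogonal splitting $\mathcal{L}_{\Gamma}^{\perp}=H(TM^0)\oplus\mathcal{L}_{\Gamma}^{'}$ makes $\overline{\nabla}$ block-diagonal with respect to the inclusion $i$ and the projection $\pi$ in the exact sequence \eqref{III1}. Under the canonical identifications recalled above ($Q\mathcal{F}_{\Gamma}\cong\mathcal{L}_{\Gamma}^{\perp}$, $Q\mathcal{F}_V\cong H(TM^0)$, $V(TM^0)/\mathcal{L}_{\Gamma}\cong\mathcal{L}_{\Gamma}^{'}$), the map $i$ is simply the natural inclusion of $\mathcal{L}_{\Gamma}^{'}$ into $\mathcal{L}_{\Gamma}^{\perp}$, while $\pi$ is the projection of $\mathcal{L}_{\Gamma}^{\perp}$ onto its $H(TM^0)$-component. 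So the whole proposition amounts to unwinding these two definitions.

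For the first identity, I would take $Y\in\Gamma(\mathcal{L}_{\Gamma}^{'})$ and observe that $i(Y)=Y$, regarded as a section of $\mathcal{L}_{\Gamma}^{\perp}$ whose horizontal part vanishes and whose $\mathcal{L}_{\Gamma}^{'}$-part is $Y$ itself. The definition of $\overline{\nabla}$ then gives $\overline{\nabla}_X i(Y)=\nabla^*_{1X}(0)+\nabla^v_X Y=\nabla^v_X Y$. On the other hand, property a) of the Vaisman connection guarantees that $\nabla^v_X Y\in\Gamma(\mathcal{L}_{\Gamma}^{'})$, so $i(\nabla^v_X Y)=\nabla^v_X Y$ as a section of $\mathcal{L}_{\Gamma}^{\perp}$ as well, and the two sides coincide.

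For the second identity, I would write an arbitrary $Z\in\Gamma(\mathcal{L}_{\Gamma}^{\perp})$ as $Z=Z^h+Z'$ with $Z^h\in\Gamma(H(TM^0))$ and $Z'\in\Gamma(\mathcal{L}_{\Gamma}^{'})$, so that $\pi(Z)=Z^h$. By definition $\overline{\nabla}_X Z=\nabla^*_{1X}Z^h+\nabla^v_X Z'$, where the first term lies in $H(TM^0)$ (since $\nabla^*_1$ is a connection on $H(TM^0)$) and the second lies in $\mathcal{L}_{\Gamma}^{'}$ (again by property a). Applying $\pi$, which annihilates the $\mathcal{L}_{\Gamma}^{'}$-summand, yields $\pi(\overline{\nabla}_X Z)=\nabla^*_{1X}Z^h=\nabla^*_{1X}\pi(Z)$.

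I do not expect a genuine obstacle here: the content of the proposition is exactly that $\overline{\nabla}$ was constructed to be compatible with the exact sequence, and the only structural facts needed are that $\nabla^*_1$ takes values in $H(TM^0)$ and that $\nabla^v$ preserves $\mathcal{L}_{\Gamma}^{'}$, both already established (the latter via property a) and the explicit coefficients $s_i^i=-1$, $s_i^j=0$ for $i\neq j$). The proof is therefore a direct verification rather than a computation; the only point requiring a moment's care is keeping track of which summand each term of $\overline{\nabla}_X Z$ falls into, so that $i$ and $\pi$ act as expected.
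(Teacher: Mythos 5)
Your verification is correct and is exactly the argument the paper relies on: the paper states Proposition \ref{p3.5} without an explicit proof, deferring to the general theory of subfoliations in \cite{C-M}, and in the present metric-split situation that theory reduces to precisely your observation that $\overline{\nabla}=\nabla^*_1\oplus\nabla^v$ is block-diagonal for the splitting $\mathcal{L}_{\Gamma}^{\perp}=H(TM^0)\oplus\mathcal{L}_{\Gamma}^{'}$, with $i$ the inclusion of the second summand and $\pi$ the projection onto the first. The only structural inputs, namely that $\nabla^*_1$ takes values in $H(TM^0)$ and that $\nabla^v$ preserves $\mathcal{L}_{\Gamma}^{'}$ (property a) of the Vaisman connection), are correctly identified and already established in the text, so nothing is missing.
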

\begin{proposition}
\label{p3.6}
The triple of basic connections $(\nabla^v,\nabla^*_1,\overline{\nabla})$ is adapted to the subfoliation $(\mathcal{F}_V,\mathcal{F}_{\Gamma})$ of $(TM^0,G)$.
\end{proposition}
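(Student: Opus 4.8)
The plan is to verify directly that the triple $(\nabla^v,\nabla^*_1,\overline{\nabla})$ fulfils the definition of an adapted triple in the sense of \cite{C-M}. Concretely this amounts to checking two things: that each of the three connections is basic on its respective normal bundle, and that the three connections are mutually compatible with the exact sequence
\begin{displaymath}
0\longrightarrow\mathcal{L}_{\Gamma}^{'}\stackrel{i}{\longrightarrow}\mathcal{L}_{\Gamma}^{\perp}\stackrel{\pi}{\longrightarrow}H(TM^0)\longrightarrow0,
\end{displaymath}
in the sense that $\overline{\nabla}$ restricts along $i$ to $\nabla^v$ and descends along $\pi$ to $\nabla^*_1$.

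First I would collect the basicness statements, all of which are already in hand. The Vaisman connection $\nabla^v$ satisfies $\nabla^v_{\Gamma}\frac{\overline{\partial}}{\overline{\partial}y^i}=-\frac{\overline{\partial}}{\overline{\partial}y^i}$, hence is basic on $\mathcal{L}_{\Gamma}^{'}$ by Proposition \ref{p3.2}. The restriction $\nabla^*_1$ of the Vr\^{a}nceanu connection to $T(TM^0)\times H(TM^0)$ satisfies the condition of Proposition \ref{p3.3}, namely $\nabla^*_{1\,\frac{\partial}{\partial y^j}}\frac{\delta}{\delta x^i}=0$, hence is basic on $H(TM^0)$ with respect to $\mathcal{F}_V$. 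Finally $\overline{\nabla}$ was shown by direct computation to satisfy the two relations \eqref{III9}, so by Proposition \ref{p3.4} it is basic on $\mathcal{L}_{\Gamma}^{\perp}$ with respect to $\mathcal{F}_{\Gamma}$. This disposes of the first requirement.

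The second requirement is exactly the content of Proposition \ref{p3.5}, which I would simply invoke. The relations $i(\nabla^v_XY)=\overline{\nabla}_Xi(Y)$ and $\pi(\overline{\nabla}_XZ)=\nabla^*_{1X}\pi(Z)$ express that $\overline{\nabla}$ restricts to $\nabla^v$ on the subbundle $\mathcal{L}_{\Gamma}^{'}$ and projects to $\nabla^*_1$ on the quotient $H(TM^0)$; these are immediate from the defining formula $\overline{\nabla}_XZ=\nabla^*_{1X}Z^h+\nabla^v_XZ'$ once one reads off $i$ as the inclusion of the $Z'$-summand and $\pi$ as the projection onto the $Z^h$-summand in the splitting $\mathcal{L}_{\Gamma}^{\perp}=H(TM^0)\oplus\mathcal{L}_{\Gamma}^{'}$.

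Combining the two requirements yields the claim. I do not anticipate a genuine obstacle here: once Propositions \ref{p3.2}--\ref{p3.5} are available, Proposition \ref{p3.6} is an assembly of facts rather than a fresh computation. The only point meriting attention is to confirm that the three inclusions and projections $i_0,i_1,i_2$ and $\pi_0,\pi_1,\pi_2$ attached to the subfoliation, together with $i,\pi$ above, are all induced by the single orthogonal decomposition $T(TM^0)=H(TM^0)\oplus\mathcal{L}_{\Gamma}^{'}\oplus\mathcal{L}_{\Gamma}$; this consistency is what makes the abstract notion of ``adapted'' from \cite{C-M} apply verbatim to the present geometric situation.
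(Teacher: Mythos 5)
Your proposal is correct and matches the paper's (largely implicit) argument: the paper states Proposition \ref{p3.6} as an immediate consequence of the basicness of $\nabla^v$, $\nabla^*_1$, $\overline{\nabla}$ established just before (via Propositions \ref{p3.2}--\ref{p3.4}) together with the compatibility relations of Proposition \ref{p3.5}, exactly the two ingredients you assemble. No genuinely different route is taken, and your closing remark about all inclusions and projections being induced by the single orthogonal splitting $T(TM^0)=H(TM^0)\oplus\mathcal{L}_{\Gamma}^{'}\oplus\mathcal{L}_{\Gamma}$ is precisely the consistency the paper relies on when identifying the quotient bundles with orthogonal complements.
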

\begin{remark}
We notice that the connection $\nabla=\nabla^v\oplus\nabla^*_1$ is a basic connection on $Q(\mathcal{F}_V,\mathcal{F}_{\Gamma})=\mathcal{L}_{\Gamma}^{'}\oplus H(TM^0)=\mathcal{L}_{\Gamma}^{\perp}$ and its curvature $K$ satisfy
\begin{equation}
K(X,Y)=0\,,\,\,\forall\,X,Y\in\Gamma(\mathcal{L}_{\Gamma}).
\label{III10}
\end{equation}
The Bott's obstruction theorem for subfoliations (Theorem 3.9 \cite{C-M}) leads to:
\begin{corollary}
Let $\mathcal{P}_1$ and $\mathcal{P}_2$ be homogeneous polynomials in the real Pontryagin classes of $\mathcal{L}_{\Gamma}^{'}$ and $H(TM^0)$ respectively, of degree $l_k$, $k=1,2$. If at least one of the inequalities $l_2>2n$, $l_1+l_2>4n-2$ is satisfied, then $\mathcal{P}_1\cdot\mathcal{P}_2=0$.
\end{corollary}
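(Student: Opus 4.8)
The plan is to obtain this as a direct consequence of the Bott obstruction theorem for subfoliations (Theorem 3.9 of \cite{C-M}), instantiated at the subfoliation $(\mathcal{F}_V,\mathcal{F}_{\Gamma})$ with the two codimensions $q_1=n$ and $q_2=2n-1$. All the structural hypotheses of that theorem have in fact already been secured in the previous subsection: by Proposition \ref{p3.3} the connection $\nabla^*_1$ is a basic connection on $QF_1\cong H(TM^0)$ with respect to the vertical foliation $\mathcal{F}_V$, by Proposition \ref{p3.4} the connection $\overline{\nabla}=\nabla^v\oplus\nabla^*_1$ is a basic connection on $QF_2\cong\mathcal{L}_{\Gamma}^{\perp}=\mathcal{L}_{\Gamma}^{'}\oplus H(TM^0)$ with respect to the line foliation $\mathcal{F}_{\Gamma}$, and by Proposition \ref{p3.6} the triple $(\nabla^v,\nabla^*_1,\overline{\nabla})$ is adapted to the subfoliation. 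The only additional fact I would invoke is the curvature identity \eqref{III10}, namely that the curvature $K$ of $\nabla=\nabla^v\oplus\nabla^*_1$ is flat along the leaves of $\mathcal{F}_{\Gamma}$, $K(X,Y)=0$ for $X,Y\in\Gamma(\mathcal{L}_{\Gamma})$.

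Granting these, the Chern--Weil mechanism behind the theorem runs as follows. I would represent $\mathcal{P}_2$ by a Pontryagin form built from the curvature $K^*_1$ of $\nabla^*_1$ and $\mathcal{P}_1$ by a Pontryagin form built from the curvature $K^v$ of $\nabla^v$; since $\mathcal{L}_{\Gamma}^{\perp}=\mathcal{L}_{\Gamma}^{'}\oplus H(TM^0)$ is a Whitney sum and $\overline{\nabla}$ splits accordingly, the product $\mathcal{P}_1\cdot\mathcal{P}_2$ is realised as a Pontryagin form of the normal bundle $QF_2\cong\mathcal{L}_{\Gamma}^{\perp}$ computed from the curvature $K=K^v\oplus K^*_1$ of $\overline{\nabla}$. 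For the first inequality I would apply the classical Bott counting argument to $\mathcal{F}_V$: since $\nabla^*_1$ is basic, its curvature $K^*_1$ is flat along the leaves of $\mathcal{F}_V$, so in a local coframe adapted to $\mathcal{F}_V$ each curvature factor appearing in a characteristic monomial must carry at least one of the $q_1=n$ transverse $1$-forms; as these anticommute and only $n$ of them are available, every monomial of cohomological degree exceeding $2q_1=2n$ vanishes, whence $\mathcal{P}_2=0$ once $l_2>2n$, and a fortiori $\mathcal{P}_1\cdot\mathcal{P}_2=0$.

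For the second inequality I would run the same counting for the line foliation $\mathcal{F}_{\Gamma}$, this time using $\overline{\nabla}$ and the identity \eqref{III10}: flatness of $K$ along $\mathcal{F}_{\Gamma}$ forces at least one of the $q_2=2n-1$ transverse $1$-forms into each curvature factor, so any Pontryagin monomial of cohomological degree greater than $2q_2=4n-2$ must vanish, giving $\mathcal{P}_1\cdot\mathcal{P}_2=0$ whenever $l_1+l_2>4n-2$. The substantive point --- and the only place where anything must be verified --- is that the adapted triple satisfies exactly the basicity and curvature conditions required to feed into Theorem 3.9 of \cite{C-M}; once \eqref{III10} together with Propositions \ref{p3.3}, \ref{p3.4} and \ref{p3.6} are in hand, the corollary follows formally, with no further computation. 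I would also note that, since $\dim TM^0=2n$ and both thresholds $2n$ and $4n-2$ are at least $2n$, the vanishing is in addition forced on purely dimensional grounds; thus the genuine content of the statement is simply that it is the specialisation of the general subfoliation obstruction to the Finslerian normal bundles $\mathcal{L}_{\Gamma}^{'}$ and $H(TM^0)$.
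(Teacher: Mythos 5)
Your proposal matches the paper's own treatment: the corollary is obtained there exactly as you describe, by feeding the adapted triple $(\nabla^v,\nabla^*_1,\overline{\nabla})$ and the curvature identity \eqref{III10} into Bott's obstruction theorem for subfoliations (Theorem 3.9 of \cite{C-M}) with $q_1=n$, $q_2=2n-1$. Your added Chern--Weil counting argument and the observation that the vanishing is also forced on purely dimensional grounds (since $\dim TM^0=2n$) go beyond what the paper records, but do not change the route.
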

\end{remark}

\noindent
Adelina Manea and Cristian Ida\\
Department of Mathematics and Computer Science\\
University Transilvania of Bra\c{s}ov\\
Address: Bra\c{s}ov 500091, Str. Iuliu Maniu 50, Rom\^{a}nia\\
email: \textit{amanea28@yahoo.com; cristian.ida@unitbv.ro}\\
\\

\smallskip

\end{document}